\newcommand{\R}{\mathbb R}
\newcommand{\tres}{|\!|\!|}
\newtheorem{theorem}{Theorem}[section]
\newtheorem{proposition}[theorem]{Proposition}
\newtheorem{remark}[theorem]{Remark}
\newtheorem{lemma}[theorem]{Lemma}
\newtheorem{corollary}[theorem]{Corollary}
\begin{document}
\title[Generalized KdV equation]{On well-posedness and wave operator for the gKdV equation}

%\author[]{}
%\address{}
%\email{}
%\thanks{}
%\begin{abstract}
%\end{abstract}

\maketitle

\begin{center}
 {\large \textbf{Luiz G. Farah\footnote{Partially supported by FAPEMIG, CNPq and CAPES/Brazil.}}}\\
 {\small ICEx, Universidade Federal de Minas Gerais \\
Av. Ant\^onio Carlos, 6627, Caixa Postal 702, 30123-970, Belo Horizonte-MG, Brazil.    \\
E-mail: lgfarah@gmail.com}\\
 \end{center}

  \begin{center}
  {\large \textbf{Ademir Pastor\footnote{Partially supported by FAPESP and CNPq/Brazil.}}}\\ {\small IMECC-UNICAMP\\
 Rua S\'ergio Buarque de Holanda, 651, 13083-859, Campinas-SP, Brazil.\\
  E-mail: apastor@ime.unicamp.br}\\
  \end{center}

 \begin{abstract}
We consider the generalized Korteweg-de Vries (gKdV) equation $\partial_t
u+\partial_x^3u+\mu\partial_x(u^{k+1})=0$, where $k>4$ is an integer number
and $\mu=\pm1$. We give an alternative proof of the Kenig, Ponce, and Vega
result in \cite{kpv1}, which asserts local and global well-posedness in
$\dot{H}^{s_k}(\R)$, with $s_k=(k-4)/2k$. A blow-up alternative in suitable
Strichatz-type spaces is also established. The main tool is a new linear
estimate. As a consequence, we also construct a wave operator in the critical
space $\dot{H}^{s_k}(\R)$, extending the results of C$\hat{\textrm{o}}$te
\cite{RC}.
 \end{abstract}

\section{Introduction}\label{introduction}

In this paper, we consider the  generalized Korteweg-de Vries (gKdV) equation
\begin{equation}\label{gkdv}
\begin{cases}
\partial_t u+\partial_x^3u+\mu\partial_x(u^{k+1})=0, \;\;x\in\R, \;t>0, \\
u(x,0)=u_0(x),
\end{cases}
\end{equation}
where $\mu=\pm1$ and $k > 4$ is an integer number.

In the particular case $k=1$, this equation was derived by Korteweg and de
Vries \cite{KdV95} in their study of waves on shallow water. Here, we are
mainly interested in the case $k>4$ (the $L^2$ supercritical case), which is
a generalization of the model proposed in \cite{KdV95}.

Well-posednes for the Cauchy problem \eqref{gkdv} is now well understood. We
first recall the scaling argument: if $u$ is a solution of \eqref{gkdv},
then, for any $\lambda>0$, $u_\lambda(x,t)=\lambda^{2/k}u(\lambda
x,\lambda^3t)$ is also a solution with initial data
$u_\lambda(x,0)=\lambda^{2/k}u_0(\lambda x)$. Moreover,
$$
\|u_\lambda(\cdot,0)\|_{\dot{H}^s}=\lambda^{s+2/k-1/2}\|u_0\|_{\dot{H}^s}.
$$
Thus, for each $k$ fixed, the scale-invariant Sobolev space is
$\dot{H}^{s_k}(\R)$, $s_k=1/2-2/k$. As a consequence, the natural Sobolev
spaces to study equation \eqref{gkdv} are  ${H}^{s}(\R)$ with $s\geq s_k$.
The well-posedness theory to the Cauchy problem \eqref{gkdv} was developed by
Kenig, Ponce, and Vega \cite{kpv1} (see also Kato \cite{K83} for a previous
result on this direction). Concerning the small data global theory in the
critical Sobolev space $\dot{H}^{s_k}(\mathbb{R})$, it was proved in
\cite[Theorem 2.15]{kpv1} the following result.

\begin{theorem}\label{local2}
Let $k>4$ and $s_k=(k-4)/2k$. Then there exists $\delta_k>0$ such that for any $u_0\in \dot{H}^{s_k}(\R)$ with
$$\|D^{s_k}_xu_0\|_{2}<\delta_k$$
there exists a unique solution $u(\cdot)$ of the IVP \eqref{gkdv} satisfying
\begin{equation}  \label{a12}
u\in C(\R:\dot{H}^{s_k}(\R)),
\end{equation}
\begin{equation}\label{a22}
\|D^{s_k}_xu\|_{L^5_xL^{10}_t}<\infty,
\end{equation}
\begin{equation}\label{a32}
\|D^{s_k}_xu_x\|_{L^\infty_xL^{2}_t}<\infty,
\end{equation}
and
\begin{equation}\label{a3b2}
\|D^{\alpha_k}_xD^{\beta_k}_tu\|_{L^{p_k}_xL^{q_k}_t}<\infty,
\end{equation}
where
\begin{equation} \label{a42}
\alpha_k=\frac{1}{10}-\frac{2}{5k}, \qquad
\beta_k=\frac{3}{10}-\frac{6}{5k}, \qquad
\end{equation}
\begin{equation} \label{a52}
\frac{1}{p_k}=\frac{2}{5k}+\frac{1}{10}, \qquad \frac{1}{q_k}=\frac{3}{10}-\frac{4}{5k}.
\end{equation}
Furthermore, the map $u_0\mapsto u(t)$ form $\{u_0\in
\dot{H}^{s_k}(\mathbb{R})\,:\, \|D^{s_k}_xu_0\|_{2}<\delta_k\}$ into the
class defined by \eqref{a12}-\eqref{a3b2} is Lipschitz.
\end{theorem}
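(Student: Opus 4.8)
The plan is to solve \eqref{gkdv} by a contraction-mapping argument on its Duhamel formulation, carried out in the scale-invariant spirit forced on us by the criticality of $\dot H^{s_k}$. Writing $W(t)=e^{-t\pa_x^3}$ for the Airy propagator, so that $\widehat{W(t)f}(\xi)=e^{it\xi^3}\widehat{f}(\xi)$, a function $u$ solves \eqref{gkdv} precisely when it is a fixed point of
$$
\Phi(u)(t)=W(t)u_0-\mu\int_0^t W(t-t')\,\pa_x\!\big(u(t')^{k+1}\big)\,dt'.
$$
Since the target space is critical, no smallness is gained by shrinking the time interval, so the iteration must be run with the global-in-time, scale-invariant quantities already visible in the statement. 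I would therefore set
$$
\Lambda(u)=\sup_{t\in\R}\|D^{s_k}_xu(t)\|_{2}+\|D^{s_k}_xu\|_{L^5_xL^{10}_t}+\|D^{s_k}_x\pa_xu\|_{L^\infty_xL^2_t}+\|D^{\al_k}_xD^{\b_k}_tu\|_{L^{p_k}_xL^{q_k}_t},
$$
with $\al_k,\b_k,p_k,q_k$ as in \eqref{a42}--\eqref{a52}, and work in the ball $X_a=\{\,u:\Lambda(u)\le a\,\}$ with the metric induced by $\Lambda$, the radius $a$ to be chosen comparable to $\delta_k$.

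The second step is to assemble the linear estimates. One needs the homogeneous bounds $\Lambda(W(t)u_0)\le C\|D^{s_k}_xu_0\|_{2}$ --- that is, the global Strichatz estimate $\|D^{s_k}_xW(t)u_0\|_{L^5_xL^{10}_t}\lesssim\|D^{s_k}_xu_0\|_{2}$, the sharp Kato smoothing estimate $\|\pa_xW(t)u_0\|_{L^\infty_xL^2_t}\lesssim\|u_0\|_{2}$, and the maximal-function-type estimate for $D^{\al_k}_xD^{\b_k}_tW(t)u_0$ --- together with their retarded (inhomogeneous) counterparts, which bound the $\Lambda$-norm of the Duhamel integral by suitable space-time norms of $\pa_x(u^{k+1})$ and are obtained either directly or via the Christ--Kiselev lemma. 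The new linear estimate announced in the introduction enters exactly here: it provides a cleaner form of one of these ingredients, and once it is available the remainder of the argument is the standard machinery, so I would state it first and then invoke it as a black box.

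The third step, and the heart of the proof, is the multilinear estimate
$$
\Lambda\!\left(\int_0^tW(t-t')\,\pa_x(u^{k+1})\,dt'\right)\le C\,\Lambda(u)^{k+1}
$$
together with its Lipschitz analogue for the difference of two inputs. Writing $\pa_x(u^{k+1})=(k+1)\,u^{k}\,\pa_xu$ and distributing $D^{s_k}_x$ (respectively $D^{\al_k}_xD^{\b_k}_t$) across the product by the fractional Leibniz rule of Kenig--Ponce--Vega, one is led to terms in which a single factor carries the derivative and sits in the smoothing norm $L^\infty_xL^2_t$ (or in the Strichatz norm), while the remaining $k$ copies of $u$ go into the $L^\infty$-in-$x$ or $L^\infty$-in-$t$ slots supplied by the maximal-function norm, the exponents being reconciled by H\"older's inequality in the mixed spaces. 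The values of $s_k,\al_k,\b_k$ and of $5,10,\infty,2,p_k,q_k$ are tuned precisely so that each resulting term is a scale-invariant product of exactly $k+1$ pieces of $\Lambda(u)$ with no loss of derivatives. I expect this bookkeeping to be the main obstacle: for each term one must choose which of the $k+1$ factors absorbs the derivative and into which mixed-norm space each of the others is placed, and then check that the fractional Leibniz rule applies at the required non-endpoint exponents. The hypothesis $k>4$, equivalently $s_k>0$, is what creates the derivative budget that makes this balancing possible, and the integrality of $k$ is what lets us expand $u^{k+1}$ by the Leibniz rule in the first place.

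Finally I would combine the two: the linear and multilinear estimates give $\Lambda(\Phi(u))\le C\|D^{s_k}_xu_0\|_{2}+C\,\Lambda(u)^{k+1}$ and, on differences, $\Lambda(\Phi(u)-\Phi(v))\le C\big(\Lambda(u)+\Lambda(v)\big)^{k}\,\Lambda(u-v)$, so that for $\|D^{s_k}_xu_0\|_2<\delta_k$ with $\delta_k$ small enough the map $\Phi$ is a contraction of the ball $X_a$, $a\sim\delta_k$, into itself. Its fixed point $u$ is the desired solution; since every norm in $\Lambda$ is taken over all of $\R$, $u$ is automatically global, \eqref{a22}--\eqref{a3b2} are merely the finiteness of the corresponding pieces of $\Lambda(u)$, and \eqref{a12} follows from the Duhamel representation, the strong continuity of $W(t)$ on $\dot H^{s_k}$, and the inhomogeneous $\dot H^{s_k}$-bound for the integral term. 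The Lipschitz dependence of $u_0\mapsto u$ on $\{\,\|D^{s_k}_xu_0\|_2<\delta_k\,\}$ is read off from the fixed-point construction with parameters, comparing the solutions associated to $u_0$ and $\tilde u_0$ through the contraction estimate; the uniqueness, a priori only within $X_a$, is then upgraded to the full class \eqref{a12}--\eqref{a3b2} by the standard continuation argument, using that the auxiliary norms in $\Lambda$ decay when the time interval is shrunk.
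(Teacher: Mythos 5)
Your plan is essentially a reconstruction of the original Kenig--Ponce--Vega proof of this theorem, and as such it is a viable route; but it is not the route this paper takes --- indeed the paper's stated purpose is to avoid it. You run the contraction in a space whose norm includes the mixed time-derivative quantity $\|D^{\alpha_k}_xD^{\beta_k}_tu\|_{L^{p_k}_xL^{q_k}_t}$; the paper instead proves the new linear estimate $\|U(t)u_0\|_{L^{5k/4}_xL^{5k/2}_t}\lesssim\|D^{s_k}_xu_0\|_{L^2_x}$ (Lemma \ref{lemma12}, obtained by interpolating \eqref{STR} with \eqref{STR2}) and contracts in the space $X^k_{a,b}$ built only from $\|u\|_{L^{5k/4}_xL^{5k/2}_t}$, $\|D^{s_k}_xu\|_{L^\infty_tL^2_x}$ and $\|D^{s_k}_xu\|_{L^5_xL^{10}_t}$, closing the nonlinear term through Corollary \ref{corollary21} and the Leibniz-rule computation \eqref{a6}. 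What your approach buys is that the extra regularity statements \eqref{a32} and \eqref{a3b2} come for free as components of the resolution norm; what the paper's approach buys is the complete elimination of fractional time derivatives (and of the mixed-norm Sobolev embeddings needed to exploit them), which is what makes the later applications --- the blow-up alternative of Theorem \ref{BUR} and the wave operator in Section \ref{IS} --- go through. On your route \eqref{a32} and \eqref{a3b2} are part of the conclusion of the fixed-point argument; on the paper's route they would have to be recovered a posteriori from the integral equation.

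One point in your sketch must be made precise before the bookkeeping closes: the $k$ underivatived copies of $u$ in the Leibniz expansion do not land in ``$L^\infty$-in-$x$ or $L^\infty$-in-$t$ slots'' --- both exponents in \eqref{a3b2} are finite, so it is not a maximal-function norm in that sense. The actual mechanism is a two-variable Sobolev embedding: since $1/p_k-\alpha_k=4/(5k)$ and $1/q_k-\beta_k=2/(5k)$, the bound \eqref{a3b2} controls $\|u\|_{L^{5k/4}_xL^{5k/2}_t}$, and it is this norm, raised to the power $k$, that multiplies $\|D^{s_k}_xu\|_{L^5_xL^{10}_t}$ in the estimate of $\|D^{s_k}_x(u^{k+1})\|_{L^1_xL^2_t}$ (compare \eqref{a6}, where the same product appears with $\|u\|_{L^{5k/4}_xL^{5k/2}_t}$ taken directly as a norm of the resolution space). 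Without identifying this embedding your H\"older scheme cannot be completed; with it, the argument is exactly the one in \cite{kpv1}.
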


The method to prove Theorem \ref{local2} combines smoothing effects and
Strichartz-type estimates together with the Banach contraction principle.
This result shows to be sharp in view of the work due to Birnir, Kenig,
Ponce, Svanstedt, and Vega \cite{BKPSV96}.

One of the main goals of this paper is to reprove the above theorem without
using any norm that involves derivative in the time variable. To this end,
we introduce a new linear estimate (see Lemma \ref{lemma12} below) which
allow us to obtain the following result.

\begin{theorem}\label{local}
Let $k\geq4$ and $s_k=(k-4)/2k$. Given $u_0\in \dot{H}^{s_k}(\R)$, assume
\begin{equation}\label{condK}
\|D^{s_k}_xu_0\|_{L^2_x}\leq K<\infty.
\end{equation}
There exists $\delta=\delta(K)>0$ such that if
\begin{equation}\label{condsmall}
\|U(t)u_0\|_{L^{5k/4}_xL^{5k/2}_t}<\delta,
\end{equation}
then there exists a unique solution $u$ of the integral equation
\begin{equation}\label{inteq}
u(t)=U(t)u_0-\mu\int_0^tU(t-t')\partial_x(u^{k+1})(t')dt'
\end{equation}
such that
\begin{equation}\label{i0}
\|u\|_{L^{5k/4}_xL^{5k/2}_t}\leq 2\delta \quad \textrm{and} \quad
\|D^{s_k}_xu\|_{L^5_xL^{10}_t}< 2cK.
\end{equation}
The solution also satisfies,
\begin{equation}\label{i1}
\|D^{s_k}_xu\|_{L^{\infty}_tL^2_x}<2cK.
\end{equation}
Moreover, there exist $f_\pm\in\dot{H}^{s_k}(\R)$ such that
\begin{equation}\label{scat}
\lim_{t\rightarrow\pm\infty}\|D^{s_k}_x(u(t)-U(t)f_\pm)\|_{L^2_x}=0.
\end{equation}
%Let $k>4$ and $s_k=(k-4)/2k$. Then there exists $\delta_k>0$ such that for any $u_0\in \dot{H}^{s_k}(\R)$ with
%$$\|D^{s_k}_xu_0\|_{2}<\delta_k$$
%there exists unique strong solution $u(\cdot)$ of the IVP \eqref{gkdv} satisfying
%\begin{equation}  \label{a1}
%u\in C(\R:\dot{H}^{s_k}(\R)),
%\end{equation}
%\begin{equation}\label{a2}
%\|D^{s_k}_xu\|_{L^5_xL^{10}_t}<\infty
%\end{equation}
%%\begin{equation}\label{a3}
%%\|D^{s_k}_xu_x\|_{L^\infty_xL^{2}_t}<\infty,
%%\end{equation}
%and
%\begin{equation}\label{a4}
%\|u\|_{L^{5k/4}_xL^{5k/2}_t}<\infty
%\end{equation}
%
%Furthermore, the map $u_0\mapsto u(t)$ form $\{u_0\in
%\dot{H}^{s_k}(\mathbb{R})\,:\, \|D^{s_k}_xu_0\|_{2}<\delta_k\}$ into the
%class defined by \eqref{a1}-\eqref{a4} is smooth.
\end{theorem}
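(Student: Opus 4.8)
The plan is to set up a contraction mapping argument for the integral operator
$$
\Phi(u)(t)=U(t)u_0-\mu\int_0^tU(t-t')\partial_x(u^{k+1})(t')\,dt'
$$
on a suitable complete metric space, using the new linear estimate (Lemma~\ref{lemma12}) to avoid time derivatives altogether. First I would record the relevant linear estimates: the Strichartz estimate controlling $\|D^{s_k}_xU(t)u_0\|_{L^5_xL^{10}_t}$ and $\|D^{s_k}_xU(t)u_0\|_{L^\infty_tL^2_x}$ by $\|D^{s_k}_xu_0\|_{L^2_x}$, together with their inhomogeneous (Duhamel) counterparts, and the key new estimate bounding the $L^{5k/4}_xL^{5k/2}_t$ norm of the linear flow and of the Duhamel term. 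Then I would define
$$
X=\Bigl\{u:\ \|u\|_{L^{5k/4}_xL^{5k/2}_t}\leq 2\delta,\ \ \|D^{s_k}_xu\|_{L^5_xL^{10}_t}\leq 2cK,\ \ \|D^{s_k}_xu\|_{L^\infty_tL^2_x}\leq 2cK\Bigr\},
$$
metrized by $d(u,v)=\|u-v\|_{L^{5k/4}_xL^{5k/2}_t}+\|D^{s_k}_x(u-v)\|_{L^5_xL^{10}_t}$, and check $\Phi:X\to X$ is a contraction.

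The heart of the estimate is the nonlinear term. The exponents are chosen so that $u^{k+1}=u^k\cdot u$ and the $L^{5k/4}_xL^{5k/2}_t$ norm is exactly the one that appears $k$ times by H\"older: writing $k+1=k+1$ factors, one estimates $\|D^{s_k}_x\partial_x(\text{Duhamel of }u^{k+1})\|_{L^5_xL^{10}_t}\lesssim \|D^{s_k}_x(u^{k+1})\|_{L^{5/4}_xL^{10/7}_t}$ (or the analogous dual Strichartz/smoothing pairing), and then distributes the derivative via the fractional Leibniz rule so that one factor carries $D^{s_k}_x$ in $L^5_xL^{10}_t$ while the remaining $k$ factors sit in $L^{5k/4}_xL^{5k/2}_t$; the H\"older bookkeeping $\tfrac{4}{5}\cdot\tfrac1k\cdot k+\tfrac15=1$ and the analogous identity for the $t$-exponents close exactly. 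This yields
$$
\|D^{s_k}_x(\Phi(u)-U(t)u_0)\|_{L^5_xL^{10}_t}+\|D^{s_k}_x(\Phi(u)-U(t)u_0)\|_{L^\infty_tL^2_x}\lesssim \|u\|_{L^{5k/4}_xL^{5k/2}_t}^k\,\|D^{s_k}_xu\|_{L^5_xL^{10}_t}\lesssim \delta^k K,
$$
and, using Lemma~\ref{lemma12} on the Duhamel term, $\|\Phi(u)\|_{L^{5k/4}_xL^{5k/2}_t}\leq \delta+C\delta^k K$. Choosing $\delta=\delta(K)$ small so that $C\delta^{k-1}K\leq 1$ (hence $C\delta^kK\le\delta$ and $C\delta^kK\le cK$) gives $\Phi(X)\subset X$; the same computation with one factor replaced by a difference $u-v$ and the elementary inequality $|u^{k+1}-v^{k+1}|\lesssim(|u|^k+|v|^k)|u-v|$ gives the contraction estimate $d(\Phi(u),\Phi(v))\le\tfrac12 d(u,v)$ after shrinking $\delta$ further. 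Banach's fixed point theorem then produces the unique solution $u\in X$, which gives \eqref{i0} and \eqref{i1}.

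For the scattering statement \eqref{scat}, I would define
$$
f_\pm=u_0-\mu\int_0^{\pm\infty}U(-t')\partial_x(u^{k+1})(t')\,dt',
$$
and note that this integral converges in $\dot{H}^{s_k}$: indeed
$$
\Bigl\|D^{s_k}_x\int_t^{\pm\infty}U(-t')\partial_x(u^{k+1})(t')\,dt'\Bigr\|_{L^2_x}=\Bigl\|D^{s_k}_x U(-t)\int_t^{\pm\infty}U(t-t')\partial_x(u^{k+1})(t')\,dt'\Bigr\|_{L^2_x},
$$
and applying the inhomogeneous estimate over the time interval $(t,\pm\infty)$ bounds this by $\|u\|_{L^{5k/4}_xL^{5k/2}_t((t,\pm\infty))}^k\|D^{s_k}_xu\|_{L^5_xL^{10}_t((t,\pm\infty))}$, which tends to $0$ as $t\to\pm\infty$ by dominated convergence since the global norms are finite. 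Since $u(t)-U(t)f_\pm=\mu\int_t^{\pm\infty}U(t-t')\partial_x(u^{k+1})(t')\,dt'$, the same bound gives \eqref{scat}. Uniqueness in the class of \eqref{i0} follows from the contraction estimate restricted to a small time/frequency-truncated interval, or directly since any two solutions lie in $X$ and $d(u,v)=d(\Phi u,\Phi v)\le\tfrac12 d(u,v)$ forces $u=v$.

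The main obstacle is proving the new linear estimate of Lemma~\ref{lemma12}, namely that the linear flow maps $\dot H^{s_k}$ (or rather its range under a companion Strichartz norm) into $L^{5k/4}_xL^{5k/2}_t$ and that the Duhamel term obeys the corresponding inhomogeneous bound; once that is in hand, together with the classical Kenig--Ponce--Vega $L^5_xL^{10}_t$ Strichartz and $L^\infty_xL^2_t$ local smoothing estimates, the rest is the fractional Leibniz rule and careful but routine H\"older bookkeeping to verify that all exponents match. A secondary point requiring care is that everything must be done at the level of the homogeneous space $\dot H^{s_k}$, so one must make sure the fractional Leibniz/chain rule estimates used for $D^{s_k}_x(u^{k+1})$ are the homogeneous-derivative versions valid without lower-order $L^2$ control.
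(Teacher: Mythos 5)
Your proposal is correct and follows essentially the same route as the paper: a contraction on the space defined by the $L^{5k/4}_xL^{5k/2}_t$, $L^5_xL^{10}_t$ and $L^\infty_tL^2_x$ norms, with the nonlinearity measured in $L^1_xL^2_t$ via the fractional Leibniz rule and the derivative in the Duhamel term absorbed by the dual Kato smoothing pairing, and the same construction of $f_\pm$ for \eqref{scat}. The only slip is the displayed dual exponent $L^{5/4}_xL^{10/7}_t$ (the dual Strichartz pairing, which would not recover the full derivative $\partial_x$); your own H\"older bookkeeping $\tfrac{4}{5}+\tfrac15=1$ already points to the correct $L^1_xL^2_t$ smoothing dual, which is what the paper uses.
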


It is worth to mention that the question of how small the initial data should
be to imply global well-posedness in the energy space $H^1(\R)$ have been
recently addressed by Farah, Linares, and Pastor \cite{FLP} where sufficient
conditions have been obtained.

Without imposing any smallness restriction on the initial data, a local
version of Theorem \ref{local2} is also available in \cite[Theorem
2.17]{kpv1}. Here, following the same strategy of Theorem \ref{local}, we
are able to prove the following local well-posedness result

\begin{theorem}\label{locald}
Let $k\geq4$ and $s_k=(k-4)/2k$. Given $u_0\in \dot{H}^{s_k}(\R)$ there exist $T=T(u_0)$ and a unique solution $u$ of the integral equation \eqref{inteq}
satisfying
\begin{equation}\label{d2}
\|D^{s_k}_xu\|_{L^{\infty}_{[0,T]}L^2_x}+\|u\|_{L^{5k/4}_xL^{5k/2}_{[0,T]}}+\|D^{s_k}_xu\|_{L^5_xL^{10}_{[0,T]}}<\infty.
\end{equation}

Furthermore, given $T'\in(0,T)$ there exists a neighborhood $V$ of $u_0$ in $\dot{H}^{s_k}(\R)$ such that the map $u_0\mapsto \tilde{u}(t)$ from $V$
into the class defined by \eqref{d2} in the time interval $[0,T']$ is Lipschitz.
\end{theorem}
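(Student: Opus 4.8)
The plan is to prove Theorem~\ref{locald} by a fixed-point argument for the integral equation \eqref{inteq}, mimicking the proof of Theorem~\ref{local} but replacing the global smallness hypothesis \eqref{condsmall} by a smallness that we obtain \emph{for free} on a short time interval. The starting observation is that, since $u_0\in\dot H^{s_k}(\R)$, the linear flow $U(t)u_0$ belongs to the Strichartz-type space $L^{5k/4}_xL^{5k/2}_t(\R\times\R)$ (this is exactly the estimate, presumably Lemma~\ref{lemma12} or a companion of \eqref{a22}, that underlies Theorem~\ref{local}); hence by dominated convergence $\|U(t)u_0\|_{L^{5k/4}_xL^{5k/2}_{[0,T]}}\to 0$ as $T\to0$. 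Therefore, given $u_0$, we may choose $T=T(u_0)>0$ small enough that
\begin{equation}\label{locald-small}
\|U(t)u_0\|_{L^{5k/4}_xL^{5k/2}_{[0,T]}}<\delta,
\end{equation}
where $\delta=\delta(K)$ is the constant furnished by Theorem~\ref{local} with $K=\|D^{s_k}_xu_0\|_{L^2_x}$. With this choice, the hypotheses of Theorem~\ref{local} are met on the time window $[0,T]$ (all the linear and nonlinear estimates used there localize to $[0,T]$ because the relevant norms are over $[0,T]$ in $t$ and over all of $\R$ in $x$), and we directly obtain a solution $u$ on $[0,T]$ satisfying \eqref{i0}--\eqref{i1} with time integration restricted to $[0,T]$; in particular \eqref{d2} holds.

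The next step is uniqueness in the class \eqref{d2} on $[0,T]$. This follows from the same contraction estimate used to build the solution: if $u$ and $v$ both solve \eqref{inteq} on $[0,T]$ and lie in the class \eqref{d2}, then on a possibly smaller subinterval $[0,T_0]$ one has, via the multilinear estimates controlling $\|D^{s_k}_x\partial_x(u^{k+1})\|$ in the retarded spaces, a bound of the form $\|u-v\|_{X_{[0,T_0]}}\le C(\|u\|,\|v\|)\,T_0^{\theta}\|u-v\|_{X_{[0,T_0]}}$ (or the smallness can be extracted from $\|u\|_{L^{5k/4}_xL^{5k/2}_{[0,T_0]}}+\|v\|_{L^{5k/4}_xL^{5k/2}_{[0,T_0]}}$, which is small for $T_0$ small by absolute continuity of the integral), forcing $u=v$ on $[0,T_0]$; a standard continuation/connectedness argument then propagates uniqueness to all of $[0,T]$.

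For the Lipschitz dependence, fix $T'\in(0,T)$ and let $v_0$ be close to $u_0$ in $\dot H^{s_k}(\R)$. Since $\|D^{s_k}_x(u_0-v_0)\|_{L^2_x}$ is small and $\|U(t)(u_0-v_0)\|_{L^{5k/4}_xL^{5k/2}_{[0,T']}}\le \|U(t)(u_0-v_0)\|_{L^{5k/4}_xL^{5k/2}_{t}}\le c\|D^{s_k}_x(u_0-v_0)\|_{L^2_x}$, for $v_0$ in a small enough neighborhood $V$ of $u_0$ the smallness condition \eqref{locald-small} (with $T$ replaced by $T'$) holds for $v_0$ as well; Theorem~\ref{local} then produces the corresponding solution $\tilde u$ on $[0,T']$ inside the same ball. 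Subtracting the two integral equations, applying the linear estimate to the difference of the data and the retarded multilinear estimates (with the Lipschitz-type bound $|a^{k+1}-b^{k+1}|\lesssim(|a|^k+|b|^k)|a-b|$) to the difference of the nonlinearities, and absorbing the nonlinear term into the left-hand side using the smallness of the Strichartz norms on $[0,T']$, yields
\begin{equation}\label{locald-lip}
\|D^{s_k}_x(u-\tilde u)\|_{L^\infty_{[0,T']}L^2_x}+\|u-\tilde u\|_{L^{5k/4}_xL^{5k/2}_{[0,T']}}+\|D^{s_k}_x(u-\tilde u)\|_{L^5_xL^{10}_{[0,T']}}\le C\,\|D^{s_k}_x(u_0-v_0)\|_{L^2_x},
\end{equation}
which is the asserted Lipschitz estimate. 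The main obstacle, as usual in the $\dot H^{s_k}$ critical setting, is that one cannot gain a positive power of $T$ from the nonlinear terms (the problem is scaling-critical), so the entire argument must run on smallness of the \emph{Strichartz norm} of the linear evolution rather than on shortness of the time interval per se; making sure that every multilinear estimate invoked from the proof of Theorem~\ref{local} genuinely localizes to $[0,T]$ (i.e.\ that no global-in-time structure was secretly used) is the delicate bookkeeping point, but Lemma~\ref{lemma12} is precisely designed to be time-local, so this goes through.
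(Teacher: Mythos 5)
Your proposal is correct and is exactly the argument the paper has in mind: the paper omits the proof of Theorem~\ref{locald}, stating only that it ``follows from similar arguments'' to Theorem~\ref{local}, and the intended mechanism is precisely the one you describe --- since $\|U(t)u_0\|_{L^{5k/4}_xL^{5k/2}_t}\lesssim\|D^{s_k}_xu_0\|_{L^2_x}<\infty$ by Lemma~\ref{lemma12}, dominated convergence gives $\|U(t)u_0\|_{L^{5k/4}_xL^{5k/2}_{[0,T]}}<\delta$ for $T=T(u_0)$ small, after which the contraction, uniqueness, and Lipschitz estimates of Theorem~\ref{local} run unchanged on $[0,T]$. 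You also correctly identify the two points that make this work in the critical setting: the smallness comes from the Strichartz norm of the linear flow rather than from a power of $T$, and the time of existence consequently depends on $u_0$ itself and not merely on its $\dot H^{s_k}$-norm.
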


Note that the previous result asserts that the existence time depends on the
initial data itself and not only on its norm. Our next theorem is concerned
with the behavior of the local solution near the possible blow-up time. This
is inspired by the results in \cite[Theorem 1.2]{KPV6}. Let $T^*=T^*(u_0)$ be
the maximum time of existence for the unique solution $u$ of the integral
equation \eqref{inteq} with initial data $u_0\in \dot{H}^{s_k}(\R)$. When
$T^*<\infty$, we prove that always
$\|u\|_{L^{5k/4}_xL^{5k/2}_{[0,T^{*}]}}=\infty$.  On the other hand, a direct
application of Theorem \ref{locald} implies that either the
$\dot{H}^{s_k}(\R)$-norm of $u(t)$ blows-up in time or the
$\dot{H}^{s_k}(\R)$-$\lim_{t\uparrow T^{\ast}}u(t)$ does not exist. However,
even in the case when the $\dot{H}^{s_k}(\R)$-norm of the solution $u(t)$
does not blow-up (this is the case when $k=4$ by the mass conservation law)
we stablished blow-up for the Strichartz norm that appears in \eqref{STR}. Our result reads as follows.
\begin{theorem}\label{BUR}
Assume $k\geq4$ and $s_k=(k-4)/2k$. Suppose $u_0\in \dot{H}^{s_k}(\R)$  and $T^*=T^*(u_0)$ be the maximum time of existence given in
Theorem \ref{locald}. If $T^*<\infty$ then
\begin{equation}\label{eq5}
\|u\|_{L^{5k/4}_xL^{5k/2}_{[0,T^{*}]}}=\infty. %, \quad for \;\; all\;\;t<T^*.
\end{equation}
Moreover, if $\sup_{t\in [0,T^{\ast})}\|D_x^{s_k}u(t)\|_{L^2_x}=K<\infty$, we also have
\begin{equation}\label{eq6}
\|D_x^{2/3k}u\|_{L^{3k/2}_xL^{3k/2}_{[0,T^*]}}=\infty. %, \quad for \;\; all\;\;t<T^*.
\end{equation}

\end{theorem}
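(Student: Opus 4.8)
The plan is to argue by contradiction, exploiting the blow-up alternative encoded in Theorem \ref{locald}: the local existence time depends only on the quantities controlled by the norm in \eqref{d2}, so if all of these stay finite up to $T^*$ one can restart the solution and extend past $T^*$. First I would establish \eqref{eq5}. Suppose $T^*<\infty$ but $\|u\|_{L^{5k/4}_xL^{5k/2}_{[0,T^*]}}=\delta_0<\infty$. Using the integral equation \eqref{inteq} together with the linear estimates behind Theorem \ref{local} (the new Lemma \ref{lemma12} and the Kenig--Ponce--Vega smoothing/Strichartz estimates), I would show that $\|D^{s_k}_x u\|_{L^5_xL^{10}_{[0,T^*]}}$ and $\|D^{s_k}_x u\|_{L^\infty_{[0,T^*]}L^2_x}$ are also finite: split $[0,T^*]$ into finitely many subintervals $I_j=[t_{j-1},t_j]$ on each of which $\|u\|_{L^{5k/4}_xL^{5k/2}_{I_j}}<\delta$, with $\delta=\delta(K)$ the threshold from Theorem \ref{local} and $K$ a bound on $\sup_{[0,t_{j-1}]}\|D^{s_k}_xu(t)\|_{L^2_x}$; a bootstrap on each $I_j$ then propagates the finiteness of the $\dot H^{s_k}$-norm and of the $L^5_xL^{10}_t$ norm across all of $[0,T^*]$. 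In particular $u(t)$ has a limit in $\dot H^{s_k}(\R)$ as $t\uparrow T^*$; feeding this limit as new initial data into Theorem \ref{locald} produces a solution on $[T^*,T^*+\tau)$ for some $\tau>0$, contradicting the maximality of $T^*$. Hence \eqref{eq5} holds.

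For \eqref{eq6}, I would assume in addition that $\sup_{[0,T^*)}\|D^{s_k}_xu(t)\|_{L^2_x}=K<\infty$ and that $\|D^{2/3k}_xu\|_{L^{3k/2}_xL^{3k/2}_{[0,T^*]}}<\infty$, and derive a contradiction with \eqref{eq5}. The idea is that the mixed space-time norm in \eqref{eq6}, which is exactly the analogue (without time derivatives) of the auxiliary norm \eqref{a3b2} from Theorem \ref{local2}, dominates the $L^{5k/4}_xL^{5k/2}_t$ norm when combined with the uniformly bounded $\dot H^{s_k}$ norm. Concretely, I would interpolate: write the $L^{5k/4}_xL^{5k/2}_t$ norm of $u$ as an interpolant between $\|D^{2/3k}_xu\|_{L^{3k/2}_xL^{3k/2}_t}$ and a norm controlled by $\|D^{s_k}_xu\|_{L^\infty_tL^2_x}$ together with a Strichartz endpoint norm (e.g.\ $\|D^{s_k}_xu\|_{L^5_xL^{10}_t}$ controlled via the linear estimate on short intervals), using the Sobolev embedding and Hölder in the appropriate Lebesgue exponents so that the scaling is consistent — both $s_k=(k-4)/2k$ and $2/3k$ are at the critical scaling for their respective spaces. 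This yields $\|u\|_{L^{5k/4}_xL^{5k/2}_{[0,T^*]}}<\infty$, contradicting \eqref{eq5}, and proves \eqref{eq6}.

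The main obstacle I anticipate is the bootstrap argument for \eqref{eq5}: one must verify that the implicit constants in the linear estimates do not depend on the length of the time interval (this is precisely why the global-in-time Strichartz and smoothing estimates of Kenig--Ponce--Vega, and the new Lemma \ref{lemma12}, are formulated on $\R$ rather than on $[0,T]$), and one must control the number of subintervals $I_j$ needed to partition $[0,T^*]$ — which is finite because $\|u\|_{L^{5k/4}_xL^{5k/2}_{[0,T^*]}}<\infty$, but whose count feeds into the final bound on $\|D^{s_k}_xu\|_{L^\infty_{[0,T^*]}L^2_x}$ and must be tracked carefully so the restart argument goes through. A secondary technical point is checking the exact exponent arithmetic in the interpolation for \eqref{eq6}; I would verify at the outset that $\tfrac{4}{5k}+\tfrac{1}{10}$, $\tfrac{3}{10}-\tfrac{4}{5k}$ and the derivative orders $s_k$, $2/3k$ all line up so that a single Gagliardo--Nirenberg-type inequality closes the estimate, rather than needing an iteration.
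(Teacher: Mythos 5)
Your argument for \eqref{eq5} is essentially the paper's: partition $[0,T^*]$ into finitely many intervals on which the $L^{5k/4}_xL^{5k/2}_t$ norm is small, bootstrap the Duhamel formula to propagate finiteness of $\|D^{s_k}_xu\|_{L^5_xL^{10}_t}$ and of the $\dot H^{s_k}$ norm across the intervals, and then restart at $T^*$ to contradict maximality. That part is sound and matches the paper.

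For \eqref{eq6} you rely on the same key interpolation the paper uses, namely $\|u\|_{L^{5k/4}_xL^{5k/2}_{I}}\lesssim\|D_x^{2/3k}u\|_{L^{3k/2}_xL^{3k/2}_{I}}^{3/5}\,\|D_x^{-1/k}u\|_{L^k_xL^\infty_{I}}^{2/5}$, but you run it in the contrapositive direction (finite $L^{3k/2}$ norm implies finite $L^{5k/4}_xL^{5k/2}_t$ norm), and as sketched this has a circularity gap. To bound the complementary factor $\|D_x^{-1/k}u\|_{L^k_xL^\infty_I}$ you must estimate the Duhamel term, which costs $\|D^{s_k}_x(u^{k+1})\|_{L^1_xL^2_I}\lesssim\|u\|_{L^{5k/4}_xL^{5k/2}_I}^k\|D^{s_k}_xu\|_{L^5_xL^{10}_I}$ --- i.e.\ it requires control of exactly the norm you are trying to produce; and $\|D^{s_k}_xu\|_{L^5_xL^{10}_I}$ is \emph{not} ``controlled via the linear estimate on short intervals'': these norms are scale invariant, so shortness of $I$ gives no smallness, and smallness via absolute continuity of the integral presupposes global finiteness of a norm which, by \eqref{eq5}, is in fact infinite. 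One could in principle rescue your direction with an extra continuity/bootstrap argument on intervals where the $L^{3k/2}$ norm is small, but the paper avoids the issue entirely by arguing forward: using \eqref{eq5} it extracts infinitely many disjoint intervals $I_n$ with $\|u\|_{L^{5k/4}_xL^{5k/2}_{I_n}}=\delta$ exactly, so the Duhamel bootstrap on $I_n$ closes at once and gives $\|D_x^{-1/k}u\|_{L^k_xL^\infty_{I_n}}\le 2cK$ and $\|D_x^{s_k}u\|_{L^5_xL^{10}_{I_n}}\le 2cK$; the interpolation then yields the uniform lower bound $\|D_x^{2/3k}u\|_{L^{3k/2}_xL^{3k/2}_{I_n}}\ge\delta^{5/3}/(2cK)^{2/3}$, and summing the $(3k/2)$-th powers over the infinitely many disjoint $I_n$ (legitimate because the space and time exponents coincide) gives \eqref{eq6}. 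You should restructure your second step along these lines, or else supply the missing continuity argument explicitly.
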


\begin{remark}
In the limit case $k=4$ we
recover the result in \cite[Theorem 1.2]{KPV6}.
\end{remark}

Next we turn to the construction of the wave operator associated to
the equation \eqref{gkdv}. This is the reciprocal problem of the
scattering theory, which consists in constructing a solution with a
prescribed scattering state. Roughly speaking, for a given profile
$V$ (regardless of its size), one looks for a solution of the
nonlinear problem $u(t)$, defined for large enough $t$, such
that%
\[
\lim_{t\rightarrow\infty}\Vert u(t)-u_{V}(t)\Vert_{Y}=0,
\]
where $u_{V}(t)$ is the solution of the linear problem with initial data
$V$, and $Y$ stands for a suitable Banach space. Solving the latter problem
is also known as the construction of a wave operator.

This question was studied in Besov Spaces for the generalized Boussinesq
equation in \cite{farah} and in Sobolev or weighted Sobolev spaces for
Schr\"{o}dinger equations by Ginibre, Ozawa, and Velo \cite{Gi-Oz-Ve, GV} and
for generalized Korteweg-de Vries equations by C$\hat{\textrm{o}}$te
\cite{RC}. In this last paper, the author introduced two different approaches
to deal with this problem. The case $k>4$ was treated in the weighted Sobolev
space setting. On the other hand, the case $k=4$ ($L^2$-critical KdV
equation) is simpler since the fixed point problem is in fact very similar to
that of the Cauchy problem treated by Kenig, Ponce and Vega \cite{kpv1} in
their small data global existence theorem (see Theorem \ref{local2}). In this
paper, using the new linear estimate (Lemma \ref{lemma12} below) , we show that we can also apply
the same approach to the case $k>4$, extending the C$\hat{\textrm{o}}$te's result to the
classical Sobolev spaces. Our theorem is the following

\begin{theorem}\label{TIS}
Let $k\geq4$ and $s_k=(k-4)/2k$. For any $v\in \dot{H}^{s_k}(\R)$, let $u_{v}(t)$ be the solution of the linear problem associated
with \eqref{gkdv} with initial data $v$. Then, there exist $T_0=T_0(v)$ and
$u\in C([T_0,\infty ):\dot{H}^{s_k}(\R))$ solution of the IVP \eqref{gkdv} satisfying
\begin{equation}\label{LIM}
\lim_{t\rightarrow\infty}\Vert u(t)-u_{v}(t)\Vert_{\dot{H}^{s_k}(\R)}=0.
\end{equation}

Moreover, $u(\cdot)$ is unique in the class
$$\left\{u\in L^{\infty}_t\dot{H}^{s_k}(\R) /
D^{s_k}_xu \in L^5_xL^{10}_T,  u\in L^{5k/4}_xL^{5k/2}_T\right\}.$$
\end{theorem}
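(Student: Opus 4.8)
The plan is to construct the desired solution by solving the integral equation in the infinite time interval $[T_0,\infty)$ for $T_0$ large, following the same fixed-point scheme used to prove Theorem \ref{local} but centered on the free evolution $u_v(t)=U(t)v$ instead of on a data at a finite time. Concretely, I would define the map
\[
\Phi(u)(t)=U(t)v-\mu\int_t^\infty U(t-t')\partial_x(u^{k+1})(t')\,dt',
\]
and look for a fixed point in the ball
\[
X_{T_0}=\Bigl\{u\ :\ \|D^{s_k}_xu\|_{L^\infty_tL^2_x}+\|D^{s_k}_xu\|_{L^5_xL^{10}_{[T_0,\infty)}}\le 2cK,\ \ \|u\|_{L^{5k/4}_xL^{5k/2}_{[T_0,\infty)}}\le 2\delta\Bigr\},
\]
where $K=\|D^{s_k}_xv\|_{L^2_x}$. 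The key point is that, unlike the small-data Cauchy problem, here $v$ is arbitrary, so the smallness needed to close the contraction is not obtained from $\|D^{s_k}_xv\|_{L^2_x}$ being small but rather from the fact that
\[
\|U(t)v\|_{L^{5k/4}_xL^{5k/2}_{[T_0,\infty)}}\longrightarrow 0\quad\text{as }T_0\to\infty
\]
for every fixed $v\in\dot H^{s_k}(\R)$. This decay is a consequence of the global-in-time Strichartz-type bound for the Airy group (which is finite for $v\in\dot H^{s_k}$ by the same estimates underlying Theorem \ref{local2}) together with dominated convergence in the time variable; this is the substitute for the smallness hypothesis \eqref{condsmall}.

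The estimates themselves are exactly the ones that make Theorem \ref{local} work: the retarded linear estimates coming from the smoothing effect, the Strichartz inequality in $L^5_xL^{10}_t$, and crucially the new linear estimate of Lemma \ref{lemma12}, which controls $\|u\|_{L^{5k/4}_xL^{5k/2}_t}$ of a Duhamel term without any time-derivative norm. Applying these to $\Phi$ and using the Leibniz/fractional chain rules to distribute $D^{s_k}_x$ over $u^{k+1}$, one gets
\[
\|D^{s_k}_x(\Phi(u)-U(t)v)\|_{L^5_xL^{10}_{[T_0,\infty)}}+\|\Phi(u)-U(t)v\|_{L^{5k/4}_xL^{5k/2}_{[T_0,\infty)}}\ \lesssim\ \|u\|^{k}_{L^{5k/4}_xL^{5k/2}_{[T_0,\infty)}}\,\|D^{s_k}_xu\|_{L^5_xL^{10}_{[T_0,\infty)}},
\]
and similarly for the difference $\Phi(u_1)-\Phi(u_2)$. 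On $X_{T_0}$ the right-hand side is bounded by $C(2\delta)^k(2cK)$, and since $\delta$ may be taken as small as we wish by enlarging $T_0$ (using the decay of $\|U(t)v\|_{L^{5k/4}_xL^{5k/2}_{[T_0,\infty)}}$), we obtain that $\Phi$ maps $X_{T_0}$ into itself and is a contraction there. The unique fixed point $u$ is then a solution of \eqref{inteq}-type equation on $[T_0,\infty)$, hence of \eqref{gkdv}, and belongs to $C([T_0,\infty):\dot H^{s_k}(\R))$ by the usual continuity-in-time argument.

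Finally, the scattering statement \eqref{LIM} is immediate from the construction: since
\[
u(t)-u_v(t)=-\mu\int_t^\infty U(t-t')\partial_x(u^{k+1})(t')\,dt',
\]
applying $D^{s_k}_x$ and the same retarded estimate on the tail interval $[t,\infty)$ gives
\[
\|D^{s_k}_x(u(t)-u_v(t))\|_{L^2_x}\ \lesssim\ \|u\|^{k}_{L^{5k/4}_xL^{5k/2}_{[t,\infty)}}\,\|D^{s_k}_xu\|_{L^5_xL^{10}_{[t,\infty)}},
\]
which tends to $0$ as $t\to\infty$ because these are tails of finite norms. Uniqueness in the stated class follows by a standard argument: if $u_1,u_2$ are two such solutions, then on a possibly larger $T_1\ge T_0$ both lie in a ball where the above contraction estimate applies, forcing $u_1=u_2$ on $[T_1,\infty)$, and then a continuation/connectedness argument pushes the agreement down to $T_0$. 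I expect the main obstacle to be purely bookkeeping rather than conceptual: verifying that Lemma \ref{lemma12} and the smoothing estimates are all valid on the \emph{half-line} $[T_0,\infty)$ with constants independent of $T_0$ (so that the decay of the free term is the only $T_0$-dependent quantity), and carefully justifying the passage to the limit $t\to\infty$ in the Duhamel tail. Once those are in place the argument is a routine adaptation of the proof of Theorem \ref{local}.
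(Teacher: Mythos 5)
Your proposal is correct and follows essentially the same route as the paper: the smallness that closes the contraction comes from $\|U(t)v\|_{L^{5k/4}_xL^{5k/2}_{[T_0,\infty)}}\to 0$ as $T_0\to\infty$ (a consequence of Lemma \ref{lemma12}), combined with the same smoothing and fractional Leibniz estimates used for Theorem \ref{local}. The only cosmetic difference is that the paper (following C\^ote) runs the fixed point on the perturbation $w(t)=u(t)-U(t)v$ rather than on $u$ itself, which is an equivalent reformulation.
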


The plan of this paper is as follows. In the next section we introduce some notation and prove the linear estimates related to our problem.
In Section \ref{localwp} we prove Theorems \ref{local} and \ref{BUR}. Next, in Section \ref{IS}, we show Theorem \ref{TIS}.

%%%%%%%%%%%%%%%%%%%%%%%%%%%%%%%%%%%%%%%%%%%%%%%%%%%%%%%%%%%%%%%%%%%%%%%%%%%%%%%%%%%%%%%%%%%%%%%%%%%%%%%%%%%%%%%%%%%%%%%%%%%%
%%%%%%%%%%%%%%%%%%%%%%%%%%%%%%%%%%%%%%%%%%%%%%%%%%%%%%%%%%%%%%%%%%%%%%%%%%%%%%%%%%%%%%%%%%%%%%%%%%%%%%%%%%%%%%%%%%%%%%%%%%%%
%%%%%%%%%%%%%%%%%%%%%%%%%%%%%%%%%%%%%%%%%%%%%%%%%%%%%%%%%%%%%%%%%%%%%%%%%%%%%%%%%%%%%%%%%%%%%%%%%%%%%%%%%%%%%%%%%%%%%%%%%%%%
%%%%%%%%%%%%%%%%%%%%%%%%%%%%%%%%%%%%%%%%%%%%%%%%%%%%%%%%%%%%%%%%%%%%%%%%%%%%%%%%%%%%%%%%%%%%%%%%%%%%%%%%%%%%%%%%%%%%%%%%%%%%
%%%%%%%%%%%%%%%%%%%%%%%%%%%%%%%%%%%%%%%%%%%%%%%%%%%%%%%%%%%%%%%%%%%%%%%%%%%%%%%%%%%%%%%%%%%%%%%%%%%%%%%%%%%%%%%%%%%%%%%%%%%%
%%%%%%%%%%%%%%%%%%%%%%%%%%%%%%%%%%%%%%%%%%%%%%%%%%%%%%%%%%%%%%%%%%%%%%%%%%%%%%%%%%%%%%%%%%%%%%%%%%%%%%%%%%%%%%%%%%%%%%%%%%%%
%%%%%%%%%%%%%%%%%%%%%%%%%%%%%%%%%%%%%%%%%%%%%%%%%%%%%%%%%%%%%%%%%%%%%%%%%%%%%%%%%%%%%%%%%%%%%%%%%%%%%%%%%%%%%%%%%%%%%%%%%%%%
%%%%%%%%%%%%%%%%%%%%%%%%%%%%%%%%%%%%%%%%%%%%%%%%%%%%%%%%%%%%%%%%%%%%%%%%%%%%%%%%%%%%%%%%%%%%%%%%%%%%%%%%%%%%%%%%%%%%%%%%%%%%
%%%%%%%%%%%%%%%%%%%%%%%%%%%%%%%%%%%%%%%%%%%%%%%%%%%%%%%%%%%%%%%%%%%%%%%%%%%%%%%%%%%%%%%%%%%%%%%%%%%%%%%%%%%%%%%%%%%%%%%%%%%%

\section{Notation and Preliminaries}\label{notation}

Let us start this section by introducing the notation used throughout the paper. We use $c$ to denote various constants that may
vary line by line. Given any positive numbers $a$ and $b$, the notation $a \lesssim b$ means that there exists a positive constant
$c$ such that $a \leq cb$.

We use $\|\cdot\|_{L^p}$ to denote the $L^p(\R)$ norm. If necessary, we use subscript to inform which variable we are concerned with.
The mixed norms $L^q_tL^r_x$, $L^q_{[a,b]}L^r_x$ and ${L_{T}^{q}L^r_x}$ of $f=f(x,t)$ are defined, respectively, as
\begin{equation*}
\|f\|_{L^q_tL^r_x}= \left(\int_{-\infty}^{+\infty} \|f(\cdot,t)\|_{L^r_x}^q dt \right)^{1/q}\!\!\!\!, \quad \|f\|_{L^q_tL^r_x}= \left(\int_{a}^{b} \|f(\cdot,t)\|_{L^r_x}^q dt \right)^{1/q}
\end{equation*}
and
\begin{equation*}
\|f\|_{L^q_TL^r_x}= \left(\int_{T}^{+\infty} \|f(\cdot,t)\|_{L^r_x}^q dt \right)^{1/q}
\end{equation*}
with the usual modifications when $q =\infty$ or $r=\infty$.  Similarly, we also define the norms in the spaces  $L^r_xL^q_t$, $L^r_xL^q_{[a,b]}$ and ${L^r_x}L_{T}^{q}$.

The spatial Fourier transform of $f(x)$ is given by
\begin{equation*}
\hat{f}(\xi)=\int_{\R}e^{-ix\xi}f(x)dx.
\end{equation*}

% Recall that the derivative $\partial_x$ is conjugated to multiplication by $i\xi$ by the Fourier transform.

The class of Schwartz functions is represented by $\mathcal{S}(\R)$. We
shall also define $D_x^s$ and $J_x^s$ to be, respectively, the Fourier
multiplier with symbol $|\xi|^s$ and $\langle \xi \rangle^s = (1+|\xi|)^s$.
In this case, the norm in the Sobolev spaces $H^s(\R)$ and $\dot{H}^s(\R)$
are given, respectively, by
\begin{equation*}
\|f\|_{H^s}\equiv \|J_x^sf\|_{L^2_x}=\|\langle \xi
\rangle^s\hat{f}\|_{L^2_{\xi}}, \qquad \|f\|_{\dot{H}^s}\equiv
\|D_x^sf\|_{L^2_x}=\||\xi|^s\hat{f}\|_{L^2_{\xi}}.
\end{equation*}

Let us present now some useful lemmas and inequalities. In what follows, $U(t)$ denotes the linear propagator associated with the gKdV equation, that is, for any function $u_0$,  $U(t)u_0$ is the solution of the linear problem
\begin{equation}\label{lineargkdv}
\begin{cases}
\partial_t u+\partial_x^3u=0, \;\;x\in\R, \;t\in\R, \\
u(x,0)=u_0(x).
\end{cases}
\end{equation}

We begin by recalling the results necessary to prove Theorem \ref{local}.

\begin{lemma}\label{lemma1} Let $p,q$, and $\alpha$ be such that
\begin{equation}\label{adm}
-\alpha+\dfrac{1}{p}+\dfrac{3}{q}=\dfrac{1}{2}, \quad -\dfrac{1}{2}\leq\alpha\leq\dfrac{1}{q}.
\end{equation}
Then
\begin{equation}\label{reg1}
 \|D^{\alpha}_xU(t)u_0\|_{L_t^qL_x^p}\lesssim\|u_0\|_{L^2_x}.
\end{equation}
Also, if
\begin{equation}\label{eq3}
\frac{1}{p}+\frac{1}{2q}=\frac{1}{4}, \quad \alpha=\frac{2}{q}-\frac{1}{p},
\quad 1\leq p,q\leq\infty, \quad -\frac{1}{4}\leq\alpha\leq1.
\end{equation}
Then,
\begin{equation}\label{eq4}
\|D^{\alpha}_x U(t)u_0\|_{L^{p}_xL^{q}_t}\lesssim
\|u_0\|_{L^2_x}.
\end{equation}
\end{lemma}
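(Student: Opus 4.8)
The plan is to derive both sets of estimates \eqref{reg1} and \eqref{eq4} from the two well-known fundamental estimates for the Airy group, combined with interpolation and Sobolev embedding. The two building blocks are: (i) the sharp Kato smoothing estimate $\|D_x U(t)u_0\|_{L^\infty_xL^2_t}\lesssim\|u_0\|_{L^2_x}$, together with its dual and the trivial $L^2$-conservation $\|U(t)u_0\|_{L^\infty_tL^2_x}=\|u_0\|_{L^2_x}$; and (ii) the Strichartz-type estimate coming from the decay $\|U(t)u_0\|_{L^\infty_x}\lesssim |t|^{-1/3}\|u_0\|_{L^1_x}$ via the $TT^*$ argument, which yields the fixed-time-type bounds and, after a stationary-phase/Van der Corput analysis of the kernel, the global mixed-norm estimates in $L^q_tL^p_x$. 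These are exactly the estimates established by Kenig, Ponce and Vega (see \cite{kpv1, KPV6}); I would cite them and then explain how \eqref{reg1}--\eqref{eq4} follow.

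For the first family \eqref{reg1}, I would first record the two endpoint cases that satisfy \eqref{adm}: the case $\alpha=-1/2$, $q=\infty$, $p=2$ (which is just $L^2$-conservation) and the case $\alpha=1/q$ with, say, the admissible point giving the classical Strichartz pair, and the smoothing endpoint $\alpha=1$, $q=2$, $p=\infty$. The line $-\alpha+1/p+3/q=1/2$ with the constraint $-1/2\le\alpha\le 1/q$ describes precisely the convex region interpolating between these; since $D_x^\alpha$ is a Fourier multiplier, complex (Stein) interpolation in the pair $(\alpha,1/p,1/q)$ between the conservation law endpoint, the Kato smoothing endpoint, and the Strichartz endpoint covers the whole admissible range. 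I would spell out the interpolation identities—each of $\alpha$, $1/p$, $1/q$ is an affine function of the others along \eqref{adm}—and invoke the analytic interpolation theorem of Stein to conclude. The same scheme handles \eqref{eq4}: the relations \eqref{eq3} again cut out a segment interpolating between $L^2$-conservation ($\alpha=-1/4$... actually $\alpha=0$, $p=2$, $q=\infty$ sits here) and the smoothing estimate $\|D_x U(t)u_0\|_{L^\infty_xL^2_t}\lesssim\|u_0\|_{L^2_x}$ (which is $\alpha=1$, $p=\infty$, $q=2$), passing through the diagonal Strichartz point $p=q$, so one interpolates between those three corners.

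The step I expect to require the most care is verifying that the constraint ranges $-1/2\le\alpha\le 1/q$ in \eqref{adm} and $-1/4\le\alpha\le 1$ in \eqref{eq3} are exactly the ones for which the endpoints being interpolated are valid—i.e. making sure the interpolation segment does not stray outside the region where the Kato smoothing and Airy–Strichartz estimates hold, and handling the genuinely endpoint exponents ($q=\infty$, $p=\infty$, or $\alpha$ at the extremes) where interpolation must be set up with the correct choice of analytic family. Once the admissibility bookkeeping is done, the actual estimates are immediate consequences of the cited Kenig–Ponce–Vega bounds, so there is no hard analysis beyond organizing the interpolation.
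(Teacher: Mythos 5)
The paper does not prove this lemma at all: it simply cites \cite[Theorem 2.1]{KPV4} for \eqref{reg1} and \cite[Proposition 2.1]{KPV6} for \eqref{eq4}. Since your proposal also ultimately rests on citing Kenig--Ponce--Vega, the overall approach matches, and the strategy you outline (interpolating between the smoothing, maximal-function, and dispersive-decay endpoints of the Airy group) is indeed the standard way these estimates are proved in those references.

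However, your endpoint bookkeeping --- which you yourself flag as the delicate step --- is wrong in several concrete places, and as written the interpolation would not close. First, the triple $(\alpha,p,q)=(-1/2,2,\infty)$ does not lie on the line \eqref{adm} (it gives $1/2+1/2+0\neq 1/2$); $L^2$-conservation corresponds to $\alpha=0$, $p=2$, $q=\infty$. Second, the Kato smoothing point $(\alpha,p,q)=(1,\infty,2)$ violates the constraint $\alpha\le 1/q$ in \eqref{adm} and, more importantly, lives in the reversed mixed norm $L^p_xL^q_t$, so it cannot serve as an endpoint for the $L^q_tL^p_x$ family \eqref{reg1}; the genuine extremes there are the full Strichartz line $\alpha=1/q$ (obtained from the decay $\|D_x^{1/2}U(t)u_0\|_{L^\infty_x}\lesssim |t|^{-1/2}\|u_0\|_{L^1_x}$ via $TT^*$ and Hardy--Littlewood--Sobolev) and the $\alpha=-1/2$, $p=q=\infty$ point coming from Van der Corput. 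Third, for the family \eqref{eq3} the point $(p,q)=(2,\infty)$ does not satisfy $1/p+1/(2q)=1/4$, so $L^2$-conservation is not an endpoint of that segment either; the second corner is the maximal-function estimate $\|D_x^{-1/4}U(t)u_0\|_{L^4_xL^\infty_t}\lesssim\|u_0\|_{L^2_x}$ at $(\alpha,p,q)=(-1/4,4,\infty)$, paired with Kato smoothing at $(1,\infty,2)$. If you intend to give a self-contained proof rather than a citation, you need to interpolate between these correctly identified endpoints; otherwise, simply citing \cite{KPV4} and \cite{KPV6}, as the paper does, is the cleaner option.
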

\begin{proof}
See \cite[Theorem 2.1]{KPV4} and \cite[Proposition 2.1]{KPV6}.
\end{proof}

\begin{remark}
Note that when $(p,q)=(\infty, 2)$ in \eqref{eq4} we have the sharp version of the Kato smoothing effect
\begin{equation}\label{reg11}
 \|\partial_xU(t)u_0\|_{L_x^{\infty}L_t^2}\lesssim\|u_0\|_{L^2_x}.
\end{equation}
Moreover, the dual version of \eqref{reg11} reads as follows:
\begin{equation}\label{IE1}
\|\partial_x \int_0^tU(t-t')g(\cdot,t')dt'\|_{L^{2}_x}\lesssim
\| g\|_{L^{1}_xL^{2}_t}.
\end{equation}
(see \cite[Theorem 3.5]{kpv1}).
\end{remark}

\begin{lemma}\label{kpvlemma}
Assume
\begin{equation}\label{eq1}
\frac{1}{4}\leq\alpha<\frac{1}{2}, \qquad
\frac{1}{p}=\frac{1}{2}-\alpha.
\end{equation}
Then,
\begin{equation}\label{eq2}
\|D^{-\alpha}_x U(t)u_0\|_{L^{p}_xL^\infty_t}\lesssim
\|u_0\|_{L^2_x}.
\end{equation}
%Moreover
%\begin{equation}\label{reg11}
% \|\partial_xU(t)u_0\|_{L_x^{\infty}L_t^2}\lesssim\|u_0\|_{L^2_x}.
%\end{equation}
%In particular, the dual version of \eqref{reg11} reads as follows:
%\begin{equation}\label{IE1}
%\|\partial_x \int_0^tU(t-t')g(\cdot,t')dt'\|_{L^{2}_x}\lesssim
%\| g\|_{L^{1}_xL^{2}_t}.
%\end{equation}
\end{lemma}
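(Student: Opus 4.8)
The plan is to derive \eqref{eq2} by complex interpolation between the two endpoints of the admissible range for $\alpha$. At the lower endpoint $\alpha = 1/4$ we have $1/p = 1/4$, and the desired bound
\[
\|D^{-1/4}_x U(t)u_0\|_{L^4_xL^\infty_t}\lesssim \|u_0\|_{L^2_x}
\]
is precisely the $(p,q)=(4,\infty)$ case of \eqref{eq4} in Lemma \ref{lemma1}: one checks that $\alpha = 2/q - 1/p = -1/4$ there corresponds to $D^{-1/4}_x$, the gain-of-derivative relation $1/p + 1/(2q) = 1/4$ holds, and $-1/4 \le -1/4 \le 1$, so the maximal smoothing estimate applies. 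The upper endpoint is the formal limit $\alpha \to 1/2^-$, i.e. $p\to\infty$, which corresponds to the (sharp) $L^\infty_x L^\infty_t$ bound $\|D^{-1/2+}_x U(t)u_0\|_{L^\infty_{x,t}} \lesssim \|u_0\|_{L^2_x}$; more precisely, for each fixed $\alpha_1$ slightly below $1/2$ one has $\|D^{-\alpha_1}_x U(t) u_0\|_{L^{p_1}_x L^\infty_t}\lesssim\|u_0\|_2$ from \eqref{eq4} again (with $\alpha_1$ close to $1/2$, $p_1$ large). Since the family $\{D^{-\alpha}_x U(t)u_0\}$ depends analytically on $\alpha$, Stein's interpolation theorem for analytic families of operators, applied on the strip between $\alpha = 1/4$ and $\alpha = \alpha_1 < 1/2$, yields \eqref{eq2} for every intermediate $\alpha$, and letting $\alpha_1\uparrow 1/2$ covers the whole half-open interval $1/4\le\alpha<1/2$. (Alternatively, one may cite this directly: this is \cite[Corollary 2.9]{kpv1} or can be read off from \cite[Theorem 2.1]{KPV4}; the inequality is stated there in exactly this form.)

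An equivalent and perhaps cleaner route avoids interpolation and instead uses the pointwise kernel bound for the Airy group together with fractional integration. Writing $U(t)u_0 = A_t * u_0$ where $\widehat{A_t}(\xi) = e^{it\xi^3}$, the operator $D^{-\alpha}_x U(t)$ has spatial convolution kernel whose $L^\infty_t$-in-time, $L^p_x$-in-space behavior is governed by the oscillatory integral $\int e^{i(x\xi + t\xi^3)}|\xi|^{-\alpha}\,d\xi$; stationary phase and scaling show that $\sup_t$ of this kernel decays like $|x|^{-(1-2\alpha)/\dots}$ in a way that places it in weak-$L^p_x$ exactly when $1/p = 1/2 - \alpha$, and then Young / Hardy–Littlewood–Sobolev against $u_0\in L^2_x$ closes the estimate. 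I would present whichever of the two arguments is shorter given what the paper already invokes; since Lemma \ref{lemma1} is already cited from \cite{KPV4, KPV6}, the interpolation argument has the advantage of requiring no new machinery.

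The main obstacle — really the only subtle point — is handling the \emph{open} upper endpoint $\alpha = 1/2$ correctly: the estimate genuinely fails at $\alpha = 1/2$ (this is the borderline where $D^{-1/2}_x U(t)u_0$ need not be bounded), so one cannot interpolate from a single endpoint there but must either interpolate from a one-parameter family of sub-endpoints $\alpha_1 \uparrow 1/2$, or argue directly and track the constant's blow-up as $\alpha\to 1/2^-$. Everything else — verifying the admissibility relations in \eqref{eq3}–\eqref{eq4} at the lower endpoint, and checking analyticity of the family in the strip — is routine. I would therefore spend the bulk of the write-up making the interpolation setup precise and dispatch the lower endpoint by a one-line appeal to Lemma \ref{lemma1}.
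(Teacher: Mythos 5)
The paper does not prove this lemma at all; it simply cites \cite[Lemma 3.29]{kpv1}. Your proposal attempts an actual proof, so the relevant question is whether your argument closes. Your primary route (interpolation) has a concrete gap at the upper endpoint. You claim that \eqref{eq4} yields $\|D^{-\alpha_1}_xU(t)u_0\|_{L^{p_1}_xL^\infty_t}\lesssim\|u_0\|_{L^2_x}$ for $\alpha_1$ close to $1/2$ and $p_1$ large, but the admissibility conditions \eqref{eq3} contain \emph{two} constraints, and setting $q=\infty$ in $1/p+1/(2q)=1/4$ forces $p=4$ and hence $\alpha=2/q-1/p=-1/4$. So the only $L^p_xL^\infty_t$ estimate contained in Lemma \ref{lemma1} is the single point $(p,\alpha)=(4,-1/4)$ — exactly the lower endpoint $\alpha=1/4$ of the present lemma. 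There is no family of sub-endpoints $\alpha_1\uparrow 1/2$ to interpolate against, and the Stein interpolation scheme as you set it up cannot start. This is precisely why Lemma \ref{kpvlemma} is a genuinely different family of estimates from Lemma \ref{lemma1} and is quoted separately in the paper.

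Your secondary sketch is, in fact, the standard proof (and essentially the argument behind \cite[Lemma 3.29]{kpv1}), but as written it is too vague to count: the decisive exponent is left unfinished (``$|x|^{-(1-2\alpha)/\dots}$''). To make it precise: by a $TT^{*}$/duality reduction the estimate \eqref{eq2} follows from the kernel bound
\begin{equation*}
\sup_{t}\Big|\int_{\R}e^{i(x\xi+t\xi^{3})}|\xi|^{-2\alpha}\,d\xi\Big|\lesssim |x|^{2\alpha-1},
\end{equation*}
valid for $1/4\le\alpha<1/2$ (van der Corput near the stationary points, where the restriction $\alpha\ge 1/4$ enters, plus integration by parts elsewhere), followed by the Hardy--Littlewood--Sobolev inequality for convolution with $|x|^{2\alpha-1}$, which maps $L^{p'}_x\to L^{p}_x$ exactly when $2/p=1-2\alpha$, i.e.\ $1/p=1/2-\alpha$; the failure of HLS at $\alpha=1/2$ explains the open endpoint. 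If you do not wish to carry out the oscillatory-integral estimate, the honest course is the one the paper takes: cite \cite[Lemma 3.29]{kpv1} directly.
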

\begin{proof}
See \cite[Lemma 3.29]{kpv1}. %and \cite[Theorem 3.5]{kpv1}.
\end{proof}

We can also obtain the following particular cases of the Strichartz estimates
in the critical Sobolev space $\dot{H}^{s_k}(\R)$.

\begin{corollary}\label{corollary12} Let $k\geq4$ and $s_k=(k-4)/2k$.
Then
\begin{equation}\label{STR2}
 \|D^{-1/k}_xU(t)u_0\|_{L^{k}_xL^{\infty}_t}\lesssim\|D^{s_k}u_0\|_{L^2_x}.
\end{equation}
and
\begin{equation}\label{STR}
 \|D^{2/3k}_xU(t)u_0\|_{L_x^{3k/2}L_t^{3k/2}}\lesssim\|D^{s_k}u_0\|_{L^2_x}.
\end{equation}
\end{corollary}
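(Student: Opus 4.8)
The plan is to derive both estimates in Corollary \ref{corollary12} by specializing the Strichartz-type estimate \eqref{eq4} of Lemma \ref{lemma1}, after first commuting the operator $D^{s_k}_x$ past the unitary group $U(t)$. Since $U(t)$ is a Fourier multiplier in $x$, it commutes with every $D^\sigma_x$, so for any admissible exponent we may write $D^{\sigma}_x U(t) u_0 = D^{\sigma - s_k}_x U(t)\bigl(D^{s_k}_x u_0\bigr)$ and then apply \eqref{eq4} (or \eqref{eq2}) with data $D^{s_k}_x u_0 \in L^2_x$ in place of $u_0$ and with the shifted derivative index $\alpha = s_k - \sigma$. Thus the whole task reduces to checking that the relevant triples $(p,q,\alpha)$ satisfy the admissibility conditions \eqref{eq3}, respectively \eqref{eq1}.

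For \eqref{STR2}, I would take $p = k$, $q = \infty$, and $\sigma = -1/k$, so that the needed index is $\alpha = s_k - \sigma = \tfrac{k-4}{2k} + \tfrac1k = \tfrac{k-2}{2k} = \tfrac12 - \tfrac1k$. This is exactly the relation $\tfrac1p = \tfrac12 - \alpha$ with $p = k$, and since $k \geq 4$ we have $\tfrac14 \leq \alpha < \tfrac12$, so the hypotheses \eqref{eq1} of Lemma \ref{kpvlemma} hold and \eqref{eq2} gives $\|D^{-\alpha}_x U(t) w\|_{L^k_x L^\infty_t} \lesssim \|w\|_{L^2_x}$ with $w = D^{s_k}_x u_0$; unwinding the commutation yields \eqref{STR2}. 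For \eqref{STR}, I would take $p = q = 3k/2$ and $\sigma = 2/3k$, so the needed index is $\alpha = s_k - \sigma = \tfrac{k-4}{2k} - \tfrac{2}{3k} = \tfrac{3(k-4) - 4}{6k} = \tfrac{3k - 16}{6k}$. One then verifies the two identities in \eqref{eq3}: first $\tfrac1p + \tfrac{1}{2q} = \tfrac{2}{3k} + \tfrac{1}{3k} = \tfrac1k$, which does not equal $\tfrac14$ in general, so a naive application fails — this is the point requiring care.

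The resolution, and the main (if modest) obstacle, is that \eqref{eq4} is only stated for the sharp admissible line $\tfrac1p + \tfrac{1}{2q} = \tfrac14$, whereas the pair $(3k/2, 3k/2)$ lies off that line for $k > 4$; the correct route is therefore to interpolate between the sharp Strichartz estimate at an endpoint of the family \eqref{eq3} and, say, the trivial $L^\infty_x L^2_t$-type or $L^2$-conservation bound, or equivalently to invoke the more general non-sharp Strichartz estimates for $U(t)$ available in \cite{KPV4, KPV6}. Concretely I would locate on the line \eqref{eq3} the point with $q = 3k/2$, namely $\tfrac1p = \tfrac14 - \tfrac{1}{3k}$ and corresponding $\alpha$, obtaining a bound in $L^{p}_x L^{3k/2}_t$, and then pass from that spatial exponent to $3k/2$ using the fixed-time decay / Bernstein-type trade-off encoded in the proof of Lemma \ref{lemma1} (this is exactly how \cite[Proposition 2.1]{KPV6} is proved). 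Since for $k = 4$ one has $3k/2 = 6$ and $\tfrac16 + \tfrac1{12} = \tfrac14$, the estimate \eqref{STR} is genuinely sharp there and degrades to an interpolated inequality only for $k > 4$, which is consistent with its intended use (it only enters the blow-up alternative \eqref{eq6} under the a priori bound $\sup_t \|D^{s_k}_x u(t)\|_{L^2_x} < \infty$). Finally, I would remark that \eqref{STR2} and \eqref{STR} are stated with $D^{s_k}$ rather than $D^{s_k}_x$ on the right-hand side purely as shorthand, the subscript being understood, so no separate argument is needed there.
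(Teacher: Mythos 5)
Your treatment of \eqref{STR2} coincides with the paper's: it is exactly the case $\alpha=\tfrac12-\tfrac1k$, $p=k$ of Lemma \ref{kpvlemma} applied to the data $D^{s_k}_xu_0$, and the constraint $\tfrac14\leq\alpha<\tfrac12$ is precisely $k\geq4$. (The missing subscript in $D^{s_k}$ on the right-hand side is indeed just a typo.)

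For \eqref{STR}, however, you correctly diagnose the obstruction --- $(p,q)=(3k/2,3k/2)$ lies off the sharp line \eqref{eq3} when $k>4$ --- but you never actually close the argument, and the two fixes you sketch do not work as stated. Interpolating with the ``trivial'' $L^\infty_xL^2_t$ smoothing bound \eqref{reg11} (rewritten for the data $D^{s_k}_xu_0$) against \eqref{STR2} forces the weight $\theta=\tfrac13$ to reach the spatial exponent $3k/2$, which lands you in $L^{3k/2}_xL^{6}_t$, not $L^{3k/2}_xL^{3k/2}_t$, except when $k=4$; and ``pass from that spatial exponent to $3k/2$ using the fixed-time decay / Bernstein-type trade-off'' is not an estimate. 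The ingredient you are missing is the \emph{other} half of Lemma \ref{lemma1}, namely the time-leading family \eqref{reg1}. Take there $q=3k/2$ and $\alpha=\tfrac{2}{3k}=\tfrac1q$ (the endpoint of \eqref{adm}); the scaling relation \eqref{adm} then forces $\tfrac1p=\tfrac12+\tfrac{2}{3k}-\tfrac2k=\tfrac{2}{3k}+s_k$, so that $\|D^{2/3k}_xU(t)D^{s_k}_xu_0\|_{L^{3k/2}_tL^{p}_x}\lesssim\|D^{s_k}_xu_0\|_{L^2_x}$. Since $\tfrac1p-s_k=\tfrac{2}{3k}$, the scalar homogeneous Sobolev embedding $\dot W^{s_k,p}(\R)\hookrightarrow L^{3k/2}(\R)$, applied in $x$ for each fixed $t$, converts $L^p_x$ into $L^{3k/2}_x$ at the cost of exactly the $s_k$ derivatives already placed on the data; and because the target norm is diagonal, $L^{3k/2}_xL^{3k/2}_t=L^{3k/2}_tL^{3k/2}_x$, so the order of integration is swapped for free. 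That two-line argument is the paper's proof. Your alternative --- vector-valued Sobolev embedding starting from the point $q=3k/2$ on the line \eqref{eq3} --- does land on the same numerology (the effective derivative index is $\tfrac{8}{3k}-\tfrac12$ either way), but it requires a Banach-space-valued embedding in the $x$ variable with the $t$-norm inside, which you neither state nor justify; the diagonal-norm swap avoids that issue entirely.
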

\begin{proof}
The estimate \eqref{STR2} is a particular case of \eqref{eq2}. On the other
hand, by Sobolev Embedding and inequality \eqref{reg1} with $q=3k/2$ and
$\alpha = 2/3k$, we obtain
\begin{equation*}
\begin{split}
\|D^{2/3k}_xU(t)u_0\|_{L_x^{3k/2}L_t^{3k/2}}\lesssim
\|D^{2/3k}_xU(t)D^{s_k}_xu_0\|_{L^{3k/2}_tL^{p}_x}
\lesssim\|D^{s_k}u_0\|_{L^2_x},
\end{split}
\end{equation*}
where $\dfrac1p=\dfrac12+\dfrac{2}{3k}-\dfrac{2}{k}$, which implies \eqref{STR}.
\end{proof}

Our next lemma is the fundamental tool to prove Theorem \ref{local}.
\begin{lemma}\label{lemma12} Let $k\geq4$ and $s_k=(k-4)/2k$.
Then
\begin{equation}\label{STR3}
 \|U(t)u_0\|_{L^{5k/4}_xL^{5k/2}_t}\lesssim\|D^{s_k}_xu_0\|_{L^2_x}.
\end{equation}
\end{lemma}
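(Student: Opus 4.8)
The plan is to realize $\|U(t)u_0\|_{L^{5k/4}_xL^{5k/2}_t}$ by interpolating between two endpoint estimates already available in the excerpt: the smoothing-type bound \eqref{STR2}, namely $\|D^{-1/k}_xU(t)u_0\|_{L^{k}_xL^{\infty}_t}\lesssim \|D^{s_k}_xu_0\|_{L^2_x}$, and the Strichartz-type bound \eqref{eq4} in the $L^p_xL^q_t$ scale. More precisely, I would first rewrite the target inequality in a homogeneous, derivative-free form by substituting $u_0\mapsto D^{-s_k}_xu_0$, so that it suffices to prove $\|D^{-s_k}_xU(t)u_0\|_{L^{5k/4}_xL^{5k/2}_t}\lesssim\|u_0\|_{L^2_x}$ with $s_k=1/2-2/k$. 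Thus the real claim is the fixed-time-free bound
\[
\|D^{-(1/2-2/k)}_xU(t)u_0\|_{L^{5k/4}_xL^{5k/2}_t}\lesssim\|u_0\|_{L^2_x}.
\]

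Next I would choose the two endpoints carefully. The first endpoint is \eqref{STR2} rewritten as $\|D^{-s_k}_xU(t)u_0\|_{L^{k}_xL^{\infty}_t}\lesssim\|u_0\|_{L^2_x}$ (since $s_k+1/k = (k-4)/2k + 1/k = (k-2)/2k$... let me instead just use \eqref{eq2} directly with a suitable $\alpha$, which is exactly the statement $\|D^{-\alpha}_xU(t)u_0\|_{L^p_xL^\infty_t}\lesssim\|u_0\|_{L^2}$ with $1/p=1/2-\alpha$ and $1/4\le\alpha<1/2$). The second endpoint is \eqref{eq4} with the admissible relation $1/p+1/(2q)=1/4$, $\alpha=2/q-1/p$. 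The point is that $(r,s)=(5k/4,5k/2)$ for the mixed norm $L^r_xL^s_t$, together with the derivative loss $-s_k$, should sit on the segment joining an $L^p_xL^\infty_t$ endpoint and an $L^p_xL^q_t$ Strichartz endpoint, both at $L^2$-regularity. Concretely, I would write $1/(5k/4) = \theta\cdot(1/p_1) + (1-\theta)\cdot(1/p_2)$ and $1/(5k/2) = \theta\cdot 0 + (1-\theta)\cdot(1/q_2)$ for the two pairs $(p_1,\infty)$ and $(p_2,q_2)$, solve for $\theta$ and the exponents, check that both $(p_1,\infty)$ (via \eqref{eq2}) and $(p_2,q_2)$ (via \eqref{eq4}) are admissible with the required derivative exponents summing correctly under interpolation, and then invoke complex interpolation (Stein–Weiss / the standard interpolation of the operator $u_0\mapsto D^{-s}_xU(t)u_0$ between mixed Lebesgue spaces) to conclude. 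One convenient choice is to take the $L^\infty_t$ endpoint $(p_1,q_1)=(k,\infty)$ with $\alpha_1 = 1/2-1/k$ (valid when $1/4\le 1/2-1/k<1/2$, i.e. $k\ge 4$), and the Strichartz endpoint with $q_2$ determined by $1/(5k/2)=(1-\theta)/q_2$; a short computation fixes $\theta = 1/5$ (so that $1/(5k/4)=(1/5)(1/k)+(4/5)(1/p_2)$ forces $1/p_2 = 2/(5k)+1/20\cdot$... these constants must be pinned down), and then the interpolated derivative is exactly $-s_k$.

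The main obstacle I anticipate is purely arithmetic but genuinely delicate: one must find the unique pair of endpoints, among the two-parameter families \eqref{eq3}–\eqref{eq4} and \eqref{eq1}–\eqref{eq2}, whose interpolation lands \emph{simultaneously} on the space $L^{5k/4}_xL^{5k/2}_t$ and on the derivative exponent $-s_k = -(k-4)/2k$, and one must verify that the admissibility constraints (in particular $-1/4\le\alpha\le 1$ in \eqref{eq3} and $1/4\le\alpha<1/2$ in \eqref{eq1}) hold for \emph{all} $k\ge 4$, including the borderline $k=4$. I would therefore carry out the bookkeeping as a small linear system in $1/p$, $1/q$, $\alpha$, $\theta$, double-check the endpoint cases, and only then state the interpolation. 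A secondary, minor point is to make sure the interpolation is applied to the correct analytic family (fixing the $t$-behavior and interpolating in the $x$-exponent, or using a single complex-interpolation step on the bilinear/operator level), so that the loss of $x$-derivatives interpolates linearly; this is standard once the exponents are identified. Modulo this computation, the lemma follows immediately.
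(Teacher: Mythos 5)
Your general strategy --- interpolating the maximal-type estimate \eqref{STR2}/\eqref{eq2} against a Strichartz-type estimate --- is the right one, but the specific second endpoint you commit to cannot work for $k>4$, and this is exactly the arithmetic obstruction you flagged but did not resolve. The functional $(1/p,1/q)\mapsto 2/p+1/q$ is affine, so it is preserved under convex combination of exponent pairs. At your first endpoint $(p_1,q_1)=(k,\infty)$ it equals $2/k$, and at the target $(5k/4,5k/2)$ it equals $8/(5k)+2/(5k)=2/k$ as well; but on the entire family \eqref{eq3}--\eqref{eq4} the constraint $1/p+1/(2q)=1/4$ forces $2/p+1/q=1/2$. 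Hence any nontrivial interpolation between $(k,\infty)$ and a point of the \eqref{eq4} family produces a value of $2/p+1/q$ strictly between $2/k$ and $1/2$, and can land on the target only if $2/k=1/2$, i.e. $k=4$. (Your trial value $\theta=1/5$ gives $1/p_2=3/(4k)$, $1/q_2=1/(2k)$, so $1/p_2+1/(2q_2)=1/k$, which is $1/4$ only for $k=4$.) So the linear system you propose to solve has no admissible solution for $k>4$, and the proof does not close as written.

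The missing ingredient is a second endpoint lying on the same line $2/p+1/q=2/k$, and the paper manufactures it in Corollary \ref{corollary12}: starting from the \emph{other} Strichartz family \eqref{reg1} (the $L^q_tL^p_x$ norms, with the time norm outside, which obeys a different admissibility relation) with $q=3k/2$, $\alpha=2/(3k)$, and applying Sobolev embedding in $x$, one obtains the diagonal estimate \eqref{STR},
\begin{equation*}
\|D_x^{2/3k}U(t)u_0\|_{L^{3k/2}_xL^{3k/2}_t}\lesssim\|D^{s_k}_xu_0\|_{L^2_x},
\end{equation*}
where the order of the mixed norms is immaterial because the exponents coincide. This point does satisfy $2/(3k/2)+1/(3k/2)=2/k$, and interpolating \eqref{STR2} and \eqref{STR} with weights $\theta=2/5$ and $1-\theta=3/5$ cancels the derivatives ($\tfrac25(-\tfrac1k)+\tfrac35\cdot\tfrac{2}{3k}=0$) and yields exactly $1/p=4/(5k)$, $1/q=2/(5k)$, i.e. \eqref{STR3}. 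To repair your argument you would need to replace your \eqref{eq4}-endpoint by \eqref{STR} (or some other estimate off the line $2/p+1/q=1/2$); restricted to the two families you name, the required endpoint simply does not exist for $k>4$.
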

\begin{proof}
Interpolate the inequalities \eqref{STR} and \eqref{STR2}.
\end{proof}

Next, we recall the following integral estimates.
\begin{lemma}\label{lemma2}
If $p_i,q_i$, $\alpha_i$, $i=1,2$, satisfy \eqref{eq3}, then
$$
\|D_x^{\alpha_1} \int_0^tU(t-t')g(\cdot,t')dt'\|_{L^{p_1}_xL^{q_1}_t}\lesssim
\|D_x^{-\alpha_2} g\|_{L^{p_2'}_xL^{q_2'}_t},
$$
where $p_2'$ and $q_2'$ are the H\"older conjugate of $p_2$ and $q_2$,
respectively.
Moreover, assume $(p_1,q_1,\alpha_1)$ satisfies \eqref{eq3} and
$(p_2, \alpha_2)$ satisfies \eqref{eq1}. Then,
\begin{equation}\label{eq51}
    \|D^{-\alpha_2}\int_0^tU(t-t')g(\cdot,t')dt'\|_{L^{p_2}_xL^\infty_t}\lesssim
    \|D^{-\alpha_1}g\|_{L^{p_1'}_xL^{q_1'}_t}.
\end{equation}
\end{lemma}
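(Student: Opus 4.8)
\textbf{Proof proposal for Lemma \ref{lemma2}.}

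The plan is to derive both estimates by duality from the corresponding homogeneous linear estimates in Lemma \ref{lemma1} and Lemma \ref{kpvlemma}, combined with the $TT^*$ (or rather $T^*T$) mechanism. For the first inequality, I would start from the fact that $(p_1,q_1,\alpha_1)$ satisfies \eqref{eq3}, so \eqref{eq4} gives $\|D^{\alpha_1}_xU(t)u_0\|_{L^{p_1}_xL^{q_1}_t}\lesssim\|u_0\|_{L^2_x}$, i.e. the operator $A:u_0\mapsto D^{\alpha_1}_xU(t)u_0$ is bounded from $L^2_x$ to $L^{p_1}_xL^{q_1}_t$. Its adjoint $A^*$ maps $L^{p_1'}_xL^{q_1'}_t$ into $L^2_x$, and is given (up to constants/complex conjugation of the symbol, which is real and even here) by $A^*g=\int_{\R}U(-t')D^{\alpha_1}_xg(\cdot,t')\,dt'$. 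Likewise, since $(p_2,q_2,\alpha_2)$ satisfies \eqref{eq3}, the operator $B:u_0\mapsto D^{\alpha_2}_xU(t)u_0$ is bounded $L^2_x\to L^{p_2}_xL^{q_2}_t$, so $B^*:L^{p_2'}_xL^{q_2'}_t\to L^2_x$. Composing, $A B^*$ is bounded from $L^{p_2'}_xL^{q_2'}_t$ to $L^{p_1}_xL^{q_1}_t$, and formally
$$
AB^*g(\cdot,t)=\int_{\R}D^{\alpha_1}_xU(t)U(-t')D^{\alpha_2}_xg(\cdot,t')\,dt'=D^{\alpha_1}_x D^{\alpha_2}_x\int_{\R}U(t-t')g(\cdot,t')\,dt'.
$$
To pass from the full-line integral $\int_{\R}$ to the retarded integral $\int_0^t$ one invokes the Christ--Kiselev lemma, which applies because $q_1>q_1'$ in the relevant range (the exponents are strictly on the "scaling-subcritical for Christ--Kiselev" side; one must check $q_1'<q_1$, equivalently $q_1>2$, which holds strictly in \eqref{eq3} except at the Kato endpoint, where a separate direct argument via \eqref{IE1} is available). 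This yields
$$
\Bigl\|D^{\alpha_1}_x\int_0^tU(t-t')g(\cdot,t')\,dt'\Bigr\|_{L^{p_1}_xL^{q_1}_t}\lesssim\|D^{-\alpha_2}_xg\|_{L^{p_2'}_xL^{q_2'}_t},
$$
after relabeling $D^{\alpha_2}_xg\rightsquigarrow g$, which is exactly the claimed first estimate.

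For \eqref{eq51} the argument is the same in spirit but uses Lemma \ref{kpvlemma} on one side. Now $(p_2,\alpha_2)$ satisfies \eqref{eq1}, so \eqref{eq2} says $C:u_0\mapsto D^{-\alpha_2}_xU(t)u_0$ is bounded $L^2_x\to L^{p_2}_xL^\infty_t$; hence $C^*:L^{p_2'}_xL^1_t\to L^2_x$, with $C^*h=\int_{\R}U(-t')D^{-\alpha_2}_xh(\cdot,t')\,dt'$. On the other side, $(p_1,q_1,\alpha_1)$ satisfies \eqref{eq3}, so as above $A:u_0\mapsto D^{\alpha_1}_xU(t)u_0$ is bounded $L^2_x\to L^{p_1}_xL^{q_1}_t$, hence $A^*:L^{p_1'}_xL^{q_1'}_t\to L^2_x$. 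Composing $C A^*:L^{p_1'}_xL^{q_1'}_t\to L^{p_2}_xL^\infty_t$ and recognizing, formally,
$$
CA^*g(\cdot,t)=D^{-\alpha_2}_xD^{\alpha_1}_x\int_{\R}U(t-t')g(\cdot,t')\,dt',
$$
one again truncates the integral to $\int_0^t$ via Christ--Kiselev (here the decisive integrability condition is $q_1'<\infty=$ the time exponent on the target $L^{p_2}_xL^\infty_t$, which is automatic), and relabels $D^{\alpha_1}_xg\rightsquigarrow g$ to get
$$
\Bigl\|D^{-\alpha_2}_x\int_0^tU(t-t')g(\cdot,t')\,dt'\Bigr\|_{L^{p_2}_xL^\infty_t}\lesssim\|D^{-\alpha_1}_xg\|_{L^{p_1'}_xL^{q_1'}_t}.
$$

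The main obstacle I anticipate is the passage from the non-retarded operator $\int_{\R}U(t-t')(\cdot)\,dt'$, which duality hands us directly, to the retarded Duhamel operator $\int_0^tU(t-t')(\cdot)\,dt'$ that actually appears in \eqref{inteq}. This is precisely where the Christ--Kiselev lemma enters, and its hypothesis fails exactly at the endpoint where a time exponent equals $2$ on both sides (the Kato smoothing pairing $L^1_xL^2_t\leftrightarrow L^\infty_xL^2_t$); at that endpoint one must instead argue directly, e.g. using \eqref{IE1} together with the triangle inequality splitting $\int_0^t=\int_{\R}-\int_t^\infty$ and a fixed-time bound — but note that the exponents in both parts of Lemma \ref{lemma2} keep us strictly away from that degenerate case, so Christ--Kiselev suffices throughout. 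Everything else is bookkeeping: checking that the algebraic conditions \eqref{eq3} and \eqref{eq1} are exactly what make the composed operators have the asserted mapping properties, and that the fractional-derivative symbols $|\xi|^{\alpha}$ commute with $U(t)$ and behave correctly under taking adjoints (which is immediate since $|\xi|^\alpha$ is real, even, and a Fourier multiplier).
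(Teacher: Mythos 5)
Your proposal follows essentially the same route as the paper, whose proof of Lemma \ref{lemma2} is simply the one-line instruction to use duality and $TT^*$ arguments combined with Lemmas \ref{lemma1} and \ref{kpvlemma} (citing \cite[Proposition 2.2]{KPV6}); your composition of the homogeneous estimates with their adjoints is exactly that argument spelled out. Your additional discussion of the Christ--Kiselev truncation from $\int_{\R}$ to $\int_0^t$ supplies a detail the paper delegates to the references (note only that the relevant exponent condition for the first estimate is $q_2'<q_1$ rather than $q_1'<q_1$, which is harmless in the ranges actually used).
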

\begin{proof}
Use the duality and $TT^*$ arguments combined with Lemmas
\ref{lemma1} and \ref{kpvlemma} (see also \cite[Proposition 2.2]{KPV6}).
\end{proof}

Applying the same ideas as the previous lemma together with Lemma \ref{lemma12} and \eqref{reg11} we also have.
\begin{corollary}\label{corollary21}
Let $k\geq4$ and $s_k=(k-4)/2k$, then the following estimate holds:
$$
\|\partial_x \int_0^tU(t-t')g(\cdot,t')dt'\|_{L^{5k/4}_xL^{5k/2}_t}\lesssim
\|D_x^{s_k} g\|_{L^{1}_xL^{2}_t}.
$$
\end{corollary}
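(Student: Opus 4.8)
The plan is to deduce Corollary \ref{corollary21} from the abstract duality/$TT^*$ machinery exactly as Lemma \ref{lemma2} was obtained, but now using the new endpoint estimate \eqref{STR3} of Lemma \ref{lemma12} on one side and the sharp Kato smoothing estimate \eqref{reg11} on the other. Concretely, write $L = \partial_x\int_0^t U(t-t')g(\cdot,t')\,dt'$ and pair it against a test function $h \in L^{(5k/4)'}_x L^{(5k/2)'}_t$. By Fubini and the skew-adjointness of the group ($U(t)^* = U(-t)$), one rewrites $\langle L, h\rangle$ as an integral of $\langle g(\cdot,t'), \partial_x U(t'-t)^* h(\cdot,t)\rangle$ over the relevant region in $(t,t')$, up to the usual truncation argument that replaces the retarded kernel by the full one (Christ--Kiselev, or the elementary $2\times2$ splitting used in \cite{KPV6}). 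Then apply Cauchy--Schwarz in $(x,t')$ pairing $\|D_x^{s_k} g\|_{L^1_x L^2_t}$ against $\|D_x^{-s_k}\partial_x \widetilde{U}h\|_{L^\infty_x L^2_t}$, where $\widetilde{U}h(\cdot,t') = \int U(t'-t)h(\cdot,t)\,dt$; note $D_x^{-s_k}\partial_x = D_x^{1-s_k}(\text{sign part})$ and $1 - s_k = 1/2 + 2/k$, so one needs a smoothing bound at that regularity level for the dual inhomogeneous operator.

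The key point is that this dual quantity is controlled by $\|h\|_{L^{(5k/4)'}_x L^{(5k/2)'}_t}$: this is precisely the dual statement of the composition of \eqref{reg11} with \eqref{STR3}. Indeed, the homogeneous estimate \eqref{STR3} says $U(t): \dot H^{s_k} \to L^{5k/4}_x L^{5k/2}_t$ is bounded, whose adjoint gives $\int U(-t)h(\cdot,t)\,dt : L^{(5k/4)'}_x L^{(5k/2)'}_t \to \dot H^{-s_k}$, i.e. $\|D_x^{-s_k}\int U(-t)h\,dt\|_{L^2_x} \lesssim \|h\|_{L^{(5k/4)'}_x L^{(5k/2)'}_t}$; then feeding this into the dual Kato estimate \eqref{IE1} (in the form $\|\partial_x \int_0^t U(t-t')g\,dt'\|_{L^2_x} \lesssim \|g\|_{L^1_x L^2_t}$, whose $TT^*$ partner is $\|\partial_x \widetilde U h\|_{L^\infty_x L^2_t} \lesssim \|D_x^{-s_k}\int U(-t)h\,dt\|_{L^2_x}$ after matching derivatives — here one uses that $\partial_x \widetilde U$ and the group commute and that $\|\partial_x U(t)\psi\|_{L^\infty_x L^2_t} \lesssim \|\psi\|_{L^2_x}$) chains the two bounds together. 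Collecting, $|\langle L, h\rangle| \lesssim \|D_x^{s_k} g\|_{L^1_x L^2_t}\,\|h\|_{L^{(5k/4)'}_x L^{(5k/2)'}_t}$, and taking the supremum over $h$ of unit norm yields the claimed estimate.

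I would organize the write-up so that it mirrors the proof of Lemma \ref{lemma2} verbatim, substituting the endpoint pair $(\text{Kato}, \text{Lemma \ref{lemma12}})$ for the Strichartz pairs used there, and referring the reader to \cite[Proposition 2.2]{KPV6} for the truncation/duality bookkeeping rather than reproducing it. The main obstacle — and the only place warranting care — is the correct matching of the fractional derivatives on the two sides of the $TT^*$ pairing: one must verify that $s_k$ is exactly the exponent for which $D_x^{-s_k}\partial_x$ composed with the inhomogeneous propagator lands in $L^\infty_x L^2_t$ given the $\dot H^{-s_k}$ output of the dual of \eqref{STR3}, which is a bookkeeping check on the identity $1 - s_k = 1/2 + 2/k$ together with the admissibility of $(\infty,2)$ in \eqref{eq4}. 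Everything else is the standard duality argument already invoked in the proof of Lemma \ref{lemma2}, so the corollary follows by "applying the same ideas" as announced.
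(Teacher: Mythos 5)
Your proposal is correct and follows essentially the same route as the paper, which proves Corollary \ref{corollary21} precisely by the duality/$TT^*$ argument of Lemma \ref{lemma2} with the pair \eqref{STR3} and \eqref{reg11}; your derivative bookkeeping ($D_x^{-s_k}\partial_x$ landing on the Kato side, with $(\infty,2,1)$ admissible in \eqref{eq4}) checks out. The only difference is that you spell out the pairing and the Christ--Kiselev truncation explicitly, whereas the paper leaves them implicit.
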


\begin{remark}
In all of the above inequalities we can replace the integral $\int_{0}^{t}$
by $\int_{t}^{\infty}$. This kind of estimate will be used in Section
\ref{IS}.
\end{remark}

% \begin{lemma}\label{lemma5}
% Let $s\geq s_k$. Let $p_k$ and $q_k$ be as in Theorem \ref{local}. The following estimate is fulfilled
% \begin{equation*}
% \begin{split}
% \left\|D^s_x(u^k\partial_xu)\right\|_{L^{5/4}_xL^{10/9}_t}\lesssim &
% \left\|D^{\alpha_k}_xD^{\beta_k}_tu\right\|_{L^{p_k}_xL^{q_k}_t}^k\|D^s_x\partial_xu\|_{L^\infty_xL^2_t}\\
% &+ \left\|D^{\alpha_k}_xD^{\beta_k}_tu\right\|_{L^{p_k}_xL^{q_k}_t}^{k-1}\|u\|_{L^{5k/4}_xL^{5k/2}_t}
% \|D^s_x\partial_xu\|_{L^\infty_xL^2_t}.
% \end{split}
% \end{equation*}
% \end{lemma}
% \begin{proof}
% See proof of Proposition 6.1 in \cite{kpv1}.
% \end{proof}
Finally, we have the following estimates for fractional derivatives.
\begin{lemma}\label{lemma6}
Let $0<\alpha<1$ and $p,p_1,p_2,q,q_1,q_2 \in (1,\infty)$ with
$\frac{1}{p}=\frac{1}{p_1}+\frac{1}{p_2}$ and
$\frac{1}{q}=\frac{1}{q_1}+\frac{1}{q_2}$. Then,
\begin{itemize}
    \item[(i)] $$
\|D^\alpha_x(fg)-fD^\alpha_xg-gD^\alpha_xf\|_{L^p_xL^q_t}\lesssim
\|D^\alpha_xf\|_{L^{p_1}_xL^{q_1}_t}\|g\|_{L^{p_2}_xL^{q_2}_t}.
$$
The same still holds if $p=1$ and $q=2$.
    \item[(ii)] $$\|D^\alpha_xF(f)\|_{L^p_xL^q_t}\lesssim
    \|D^\alpha_xf\|_{L^{p_1}_xL^{q_1}_t}\|F'(f)\|_{L^{p_2}_xL^{q_2}_t}.$$
  \end{itemize}
\end{lemma}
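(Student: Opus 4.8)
The plan is to reduce both estimates to standard vector-valued harmonic analysis---Littlewood--Paley theory, the Coifman--Meyer bilinear multiplier theorem, and the Hardy--Littlewood maximal inequality---all of which are available in the stated exponent ranges because, for $1<p,q<\infty$, the mixed-norm spaces $L^p_xL^q_t$ are UMD Banach lattices; since $D^\alpha_x$ acts only in the $x$-variable, one may argue exactly as in the scalar case, with $L^q_t$ playing the role of the values. Throughout I shall use the pointwise representation, valid for $0<\alpha<1$,
\[
D^\alpha_x h(x)=c_\alpha\int_{\R}\frac{h(x)-h(x-y)}{|y|^{1+\alpha}}\,dy ,
\]
the integral converging absolutely.

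For part (i), a direct algebraic manipulation of this formula yields the commutator identity
\[
D^\alpha_x(fg)-fD^\alpha_xg-gD^\alpha_xf=-c_\alpha\int_{\R}\frac{\bigl(f(x)-f(x-y)\bigr)\bigl(g(x)-g(x-y)\bigr)}{|y|^{1+\alpha}}\,dy ,
\]
in which the double cancellation kills the diagonal singularity. On the Fourier side, writing $\hat f(\xi)=|\xi|^{-\alpha}\,\widehat{D^\alpha_x f}(\xi)$, the left-hand side becomes a bilinear multiplier operator applied to the pair $(D^\alpha_x f,g)$, with symbol
\[
\sigma(\xi,\eta)=\bigl(|\xi+\eta|^{\alpha}-|\xi|^{\alpha}-|\eta|^{\alpha}\bigr)\,|\xi|^{-\alpha}.
\]
On the bulk region $|\xi|\sim|\eta|\sim|\xi+\eta|$ this symbol satisfies the Coifman--Meyer differential bounds, so their theorem gives the estimate with the exponent splitting $1/p=1/p_1+1/p_2$, $1/q=1/q_1+1/q_2$; the degeneracy loci $\{\xi=0\}$ and $\{\xi+\eta=0\}$, where $|\xi|^{-\alpha}$ and $|\xi+\eta|^{\alpha}$ lose smoothness, are handled by a paraproduct decomposition, Hölder on each dyadic block, and summation of the Littlewood--Paley square functions. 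The endpoint $p=1$, $q=2$ is obtained separately by a more elementary argument exploiting the Hilbert-space structure of $L^2_t$ and Plancherel in $t$, as in \cite{kpv1}.

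For part (ii), I would use the paraproduct expansion $F(f)=F(0)+\sum_{j\ge0}\bigl[F(S_{j+1}f)-F(S_jf)\bigr]$, where $S_j$ projects onto frequencies $\lesssim 2^{j}$. By the mean value theorem each increment equals $g_j\,\Delta_jf$, with $\Delta_jf=S_{j+1}f-S_jf$ and $g_j=\int_0^1F'\bigl(S_jf+s\Delta_jf\bigr)\,ds$, and $|g_j(x)|\lesssim M\bigl(F'(f)\bigr)(x)$ uniformly in $j$. Applying $D^\alpha_x$ and the Leibniz decomposition of part (i) to each block $g_j\Delta_jf$, using $\|D^\alpha_x\Delta_jf\|\sim 2^{j\alpha}\|\Delta_jf\|$, and summing, the estimate reduces to the square-function bound $\bigl\|\bigl(\sum_j|D^\alpha_x\Delta_jf|^2\bigr)^{1/2}\bigr\|_{L^{p_1}_xL^{q_1}_t}\lesssim\|D^\alpha_xf\|_{L^{p_1}_xL^{q_1}_t}$ together with the maximal inequality $\|M(F'(f))\|_{L^{p_2}_xL^{q_2}_t}\lesssim\|F'(f)\|_{L^{p_2}_xL^{q_2}_t}$; Hölder then closes the argument.

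I expect the only genuine difficulties to be technical: in part (i), tracking the degeneracies of $\sigma$ near $\xi=0$ and $\xi+\eta=0$ through the paraproduct decomposition and establishing the $p=1$, $q=2$ endpoint; and in part (ii), justifying the pointwise bound $|g_j|\lesssim M(F'(f))$ uniformly in $j$ and $s$ and controlling the lower-order commutator terms coming from the Leibniz decomposition. All of these ingredients are by now classical---they originate with Kato--Ponce, Christ--Weinstein, and Kenig--Ponce--Vega---so in the final write-up it would be legitimate simply to cite those references; the sketch above only records the underlying mechanism.
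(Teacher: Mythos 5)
The paper does not prove this lemma at all: its ``proof'' is the single line ``See \cite[Theorems A.6, A.8, and A.13]{kpv1}'', so there is nothing to compare your argument against except the cited appendix of Kenig--Ponce--Vega. Your sketch is, in essence, an outline of how those appendix theorems are actually proved (Kato--Ponce / Christ--Weinstein style: the difference-quotient representation of $D^\alpha_x$ and the double-cancellation identity for the commutator, a bilinear-multiplier/paraproduct analysis of the symbol $(|\xi+\eta|^\alpha-|\xi|^\alpha-|\eta|^\alpha)|\xi|^{-\alpha}$, and a telescoping Littlewood--Paley expansion of $F(f)$ for the chain rule), together with the correct observation that the mixed-norm $L^p_xL^q_t$ versions follow from the vector-valued (UMD-lattice) Calder\'on--Zygmund and Coifman--Meyer theory since $D^\alpha_x$ acts only in $x$. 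Your closing remark that one may simply cite Kato--Ponce, Christ--Weinstein and KPV is exactly what the authors do, so at the level of the paper your proposal is more than adequate.

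Two soft spots in the sketch deserve flagging. First, the pointwise bound $|g_j|\lesssim M\bigl(F'(f)\bigr)$ uniformly in $j$ and $s$ is \emph{not} true for an arbitrary $C^1$ function $F$: $F'$ is evaluated at the convex combination $(1-s)S_jf+sS_{j+1}f$, which is a smoothing of $f$, not of $F'(f)$. One needs structural hypotheses on $F'$ (monotonicity/convexity, as in Christ--Weinstein and in the actual statement of \cite[Theorem A.6]{kpv1}); for the only nonlinearity used in this paper, $F(u)=u^{k+1}$, the correct route is $|F'((1-s)S_jf+sS_{j+1}f)|\lesssim (Mf)^k$ followed by the maximal inequality in $L^{kp_2}_xL^{kq_2}_t$, which yields $\|F'(f)\|_{L^{p_2}_xL^{q_2}_t}$ on the right. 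Second, the endpoint $p=1$, $q=2$ of part (i) is the case the paper actually uses (the nonlinear estimate \eqref{a6} is taken in $L^1_xL^2_t$), and it lies outside the Calder\'on--Zygmund range; you correctly note it needs a separate argument using the Hilbert-space structure of $L^2_t$, which is precisely \cite[Theorem A.13]{kpv1}, but in a full write-up this endpoint cannot be waved through the Coifman--Meyer machinery. With those caveats made explicit (or replaced by the citations), the proposal is sound.
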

\begin{proof}
See  \cite[Theorems A.6, A.8, and A.13]{kpv1}.\\
\end{proof}

%%%%%%%%%%%%%%%%%%%%%%%%%%%%%%%%%%%%%%%%%%%%%%%%%%%%%%%%%%%%%%%%%%%%%%%%%%%%%%%%%%%%%%%%%%%%%%%%%%%%%%%%%%%%%%%%%%%%%%%%%%%%
%%%%%%%%%%%%%%%%%%%%%%%%%%%%%%%%%%%%%%%%%%%%%%%%%%%%%%%%%%%%%%%%%%%%%%%%%%%%%%%%%%%%%%%%%%%%%%%%%%%%%%%%%%%%%%%%%%%%%%%%%%%%
%%%%%%%%%%%%%%%%%%%%%%%%%%%%%%%%%%%%%%%%%%%%%%%%%%%%%%%%%%%%%%%%%%%%%%%%%%%%%%%%%%%%%%%%%%%%%%%%%%%%%%%%%%%%%%%%%%%%%%%%%%%%
%%%%%%%%%%%%%%%%%%%%%%%%%%%%%%%%%%%%%%%%%%%%%%%%%%%%%%%%%%%%%%%%%%%%%%%%%%%%%%%%%%%%%%%%%%%%%%%%%%%%%%%%%%%%%%%%%%%%%%%%%%%%
%%%%%%%%%%%%%%%%%%%%%%%%%%%%%%%%%%%%%%%%%%%%%%%%%%%%%%%%%%%%%%%%%%%%%%%%%%%%%%%%%%%%%%%%%%%%%%%%%%%%%%%%%%%%%%%%%%%%%%%%%%%%
%%%%%%%%%%%%%%%%%%%%%%%%%%%%%%%%%%%%%%%%%%%%%%%%%%%%%%%%%%%%%%%%%%%%%%%%%%%%%%%%%%%%%%%%%%%%%%%%%%%%%%%%%%%%%%%%%%%%%%%%%%%%
%%%%%%%%%%%%%%%%%%%%%%%%%%%%%%%%%%%%%%%%%%%%%%%%%%%%%%%%%%%%%%%%%%%%%%%%%%%%%%%%%%%%%%%%%%%%%%%%%%%%%%%%%%%%%%%%%%%%%%%%%%%%
%%%%%%%%%%%%%%%%%%%%%%%%%%%%%%%%%%%%%%%%%%%%%%%%%%%%%%%%%%%%%%%%%%%%%%%%%%%%%%%%%%%%%%%%%%%%%%%%%%%%%%%%%%%%%%%%%%%%%%%%%%%%
%%%%%%%%%%%%%%%%%%%%%%%%%%%%%%%%%%%%%%%%%%%%%%%%%%%%%%%%%%%%%%%%%%%%%%%%%%%%%%%%%%%%%%%%%%%%%%%%%%%%%%%%%%%%%%%%%%%%%%%%%%%%

\section{Global well-posedness and the blow-up alternative}\label{localwp}

Our aim in this section is to establish Theorems \ref{local} and \ref{BUR}.

\begin{proof}[Proof of Theorem \ref{local}]
As usual, our proof is based on the contraction mapping principle. Hence, we
define
$$
X^k_{a,b}=\{ u\in C(\R;\dot{H}^{s_k}(\mathbb{R})):
\|u\|_{L^{5k/4}_xL^{5k/2}_t}\leq a  \quad \textrm{and} \quad
\|D^{s_k}_xu\|_{L^{\infty}_tL^2_x}+\|D^{s_k}_xu\|_{L^5_xL^{10}_t}\leq b\}.
$$
%$$
%X_k=\{u\in L^{5k/4}_xL^{5k/2}_t;\; \tres u\tres_{s_k}<\infty\}
%$$
%and
%$$
%X_k^a=\{u\in X_k;\;\tres u\tres_{s_k}\leq a\},
%$$
%where
%\begin{equation*}
%\begin{split}
%\tres
%u\tres_{s_k}=&\|u\|_{L^\infty_t\dot{H}^{s_k}}+\|D^{s_k}_xu\|_{L^5_xL^{10}_t}+\|u\|_{L^{5k/4}_xL^{5k/2}_t}.
%\end{split}
%\end{equation*}
On $X^k_{a,b}$ consider the integral operator
\begin{equation}\label{Phi}
\Phi(u)(t):=U(t)u_0-\mu\int_0^tU(t-t')\partial_x(u^{k+1})(t')dt'.
\end{equation}

We need to show that
$$\Phi:X^k_{a,b} \rightarrow X^k_{a,b}$$
is a contraction, for an appropriated choice of the parameters $a > 0$ and
$b>0$.

We first estimate the $\|\cdot\|_{L^{5k/4}_xL^{5k/2}_t}$--norm. From
Corollary \ref{corollary21},
\begin{equation*}
\begin{split}
\|\Phi(u)\|_{L^{5k/4}_xL^{5k/2}_t}&\leq
\|U(t)u_0\|_{L^{5k/4}_xL^{5k/2}_t}+c\|D^{s_k}_x(u^{k+1})\|_{L^1_xL^2_t}.
\end{split}
\end{equation*}
The estimate of the term $\|D^{s_k}_x(u^{k+1})\|_{L^1_xL^2_t}$ can be found
in \cite{kpv1} (see equation (6.1)). For the sake of completeness we will
also perform it here. Indeed, applying the fractional derivative rule  in
Lemma \ref{lemma6}-(i), we obtain
\begin{equation}\label{a6}
\begin{split}
\|D^{s_k}_x(u^{k+1})&\|_{L^1_xL^2_t}\lesssim
\|u^k\|_{L^{5/4}_xL^{5/2}_t}\|D^{s_k}_xu\|_{L^5_xL^{10}_t}
+\|uD^{s_k}_x(u^k)\|_{L^1_xL^2_t}\\
&\lesssim\|u\|_{L^{5k/4}_xL^{5k/2}_t}^k\|D^{s_k}_x u\|_{L^5_xL^{10}_t}
+\|u\|_{L^{5k/4}_xL^{5k/2}_t}\|D^{s_k}_x(u^k)\|_{L^{p_0}_xL^{q_0}_t}\\
&\lesssim\|u\|_{L^{5k/4}_xL^{5k/2}_t}^k\|D^{s_k}_x u\|_{L^5_xL^{10}_t}
+\|u\|_{L^{5k/4}_xL^{5k/2}_t}\|D^{s_k}_xu\|_{L^5_xL^{10}_t}\|u^{k-1}\|_{L^{p_1}_xL^{q_1}_t},
\end{split}
\end{equation}
where
$$
\frac{1}{p_1}\!=\!\frac{1}{p_0}-\frac{1}{5}\!=\!1-\frac{4}{5k}-\frac{1}{5}\!=\!\frac{4(k-1)}{5k}
\;\;\;
\text{and}
\;\;\;
\frac{1}{q_1}\!=\!\frac{1}{q_0}-\frac{1}{10}\!=\!\frac{1}{2}-\frac{2}{5k}-\frac{1}{10}\!=\!\frac{4(k-1)}{10k}.
$$

Therefore
\begin{equation}\label{a6a}
\begin{split}
\|\Phi(u)\|_{L^{5k/4}_xL^{5k/2}_t}&\leq \|U(t)u_0\|_{L^{5k/4}_xL^{5k/2}_t} + ca^kb\\
&\leq \delta +ca^kb.
\end{split}
\end{equation}

On the other hand, using the inhomogeneous  smoothing effect \eqref{IE1}, Lemma
\ref{lemma2} with $(p_1,q_1,\alpha_1)=(5,10,0)$,
$(p_2,q_2,\alpha_2)=(\infty,2,1)$, and \eqref{eq4} with
$(p,q,\alpha)=(5,10,0)$, estimate \eqref{a6} also implies
\begin{equation}\label{a6b}
\begin{split}
\|D^{s_k}_x\Phi(u)\|_{L^{\infty}_tL^2_x}+\|D_x^{s_k}\Phi(u)\|_{L^{5}_xL^{10}_t}&\leq cK+c\|D^{s_k}_xU(t)u_0\|_{L^5_xL^{10}_t}+c\|D^{s_k}_x(u^{k+1})\|_{L^1_xL^2_t}\\
&\leq cK +ca^kb.
\end{split}
\end{equation}

First one chooses $b=2cK$ and $a$ such that $ca^k\leq1/2$.
Therefore, once for all
$$
\|D^{s_k}_x\Phi(u)\|_{L^{\infty}_tL^2_x}+\|D^{s_k}_x\Phi(u)\|_{L^5_xL^{10}_t}\leq b.
$$
Finally,  if one chooses $\delta=a/2$ and $a$ so that
$ca^{k-1}b\leq1/2$ then we also have
$$
\|\Phi(u)\|_{L^{5k/4}_xL^{5k/2}_t}\leq a.
$$
Such calculations establish that $\Phi:X^k_{a,b} \rightarrow X^k_{a,b}$ is
well defined. For the contraction one uses similar arguments. The contraction
mapping principle then imply the existence of a unique fixed point for
$\Phi$, which is a solution of \eqref{inteq}. The proof is completed with
standard arguments.

To prove \eqref{scat}, one defines
$$
f_\pm=u_0+\mu\int_0^{\pm\infty}U(-t')\partial_x(u^{k+1})(t')dt'.
$$
Then, as in \eqref{a6}-\eqref{a6b}, we have
$$
\|D^{s_k}_x(u(t)-U(t)f_+)\|_{L^2_x}\leq\|D^{s_k}_x\int_t^{+\infty}
U(t-t')\partial_x(u^{k+1})(t')dt'\|_{L^2_x}\lesssim
\|u\|^k_{L^{5k/4}_xL^{5k/2}_{[t,+\infty)}}\|D^{s_k}_xu\|_{L^5_xL^{10}_{[t,+\infty)}}.
$$
Since $\|u\|_{L^{5k/4}_xL^{5k/2}_t}<\infty$ and
$\|D^{s_k}_xu\|_{L^5_xL^{10}_t}<\infty$, the above inequality implies
\eqref{scat}. A similar calculation holds for $f_-$.
\end{proof}

Theorem \ref{locald} follows from similar
arguments, so it will be omitted. Next, we prove the blow-up result
stated in Theorem \ref{BUR}.

\begin{proof}[Proof of Theorem \ref{BUR}]
The proof is based on the arguments in \cite[Theorem 1.2]{KPV6}. We first claim that if
$\|u\|_{L^{5k/4}_xL^{5k/2}_{[0,T^*]}}<\infty$ then $\|D^{s_k}_xu\|_{L^{5}_xL^{10}_{[0,T^*]}}<\infty$. Indeed, let
$\varepsilon_0>0$ be an arbitrary small number. Since the norm $\|u\|_{L^{5k/4}_xL^{5k/2}_{[0,T^*]}}$ is finite we can split the interval $[0,T^*]$
in  $0=t_0<t_1<\ldots<t_\ell=T^*$ such that
$\|u\|_{L^{5k/4}_xL^{5k/2}_{I_n}}<\varepsilon_0$, where $I_n=[t_n,t_{n+1}]$,
$n=0,1,\dots,\ell-1$. Since $u$ is a fixed point of the integral equation
\eqref{Phi}, from \eqref{eq4} with $(p,q,\alpha)=(5,10,0)$, we have, for
$n=0,1,\dots,\ell-1$,
\begin{equation}\label{eq9.1}
\begin{split}
\|D_x^{s_k}u\|_{L^5_xL^{10}_{I_n}}&\lesssim
\|D_x^{s_k}u_0\|_{L^2_x}+\|D^{s_k}\int_0^tU(t-t')\partial_x(u^{k+1})(t')dt'\|_{L^5_xL^{10}_{I_n}}\\
&\lesssim\|D_x^{s_k}u_0\|_{L^2_x}+\sum_{j=0}^n\|D_x^{s_k}\int_{t_j}^{t_{j+1}}U(t-t')\partial_x(u^{k+1})(t')dt'\|_{L^5_xL^{10}_{I_n}}\\
&\lesssim \|D_x^{s_k}u_0\|_{L^2_x}+\sum_{j=0}^n\|D_x^{s_k}\int_{0}^{t}U(t-t')\partial_x(u^{k+1})(t')\chi_{I_j}(t')dt'\|_{L^5_xL^{10}_{t}},\\
\end{split}
\end{equation}
where $\chi_{I_j}$ denotes the characteristic function of the interval $I_j$.
By using Lemma \ref{lemma2}, with $(p_1,q_1,\alpha_1)=(5,10,0)$ and
$(p_2,q_2,\alpha_2)=(\infty,2,1)$, we then deduce
\begin{equation}\label{eq9.2}
\begin{split}
\|D_x^{s_k}u\|_{L^5_xL^{10}_{I_n}}\lesssim
\|D_x^{s_k}u_0\|_{L^2_x}+\sum_{j=0}^n\|D_x^{s_k}(u^{k+1})\|_{L^1_xL^{2}_{I_j}}.\\
\end{split}
\end{equation}
Now, inequality \eqref{a6} yields
\begin{equation}\label{eq9.3}
\begin{split}
\|D_x^{s_k}u\|_{L^5_xL^{10}_{I_n}}&\lesssim
\|D_x^{s_k}u_0\|_{L^2_x}+\sum_{j=0}^n\|D_x^{s_k}u\|_{L^5_xL^{10}_{I_j}}\|u\|_{L^{5k/4}_xL^{5k/2}_{I_j}}^k\\
&\leq
c\|D_x^{s_k}u_0\|_{L^2_x}+c\varepsilon_0^k\sum_{j=0}^n\|D_x^{s_k}u\|_{L^5_xL^{10}_{I_j}}.
\end{split}
\end{equation}
Therefore, choosing $c\varepsilon_0^k<1/2$, we conclude
\begin{equation}\label{eq9.3a}
\begin{split}
\|D_x^{s_k}u\|_{L^5_xL^{10}_{I_n}}\leq
2c\|D_x^{s_k}u_0\|_{L^2_x}+2\sum_{j=0}^{n-1}\|D_x^{s_k}u\|_{L^5_xL^{10}_{I_j}}.
\end{split}
\end{equation}
Inequality \eqref{eq9.3a} together with an induction argument implies that
$\|D_x^{s_k}u\|_{L^5_xL^{10}_{I_n}}<\infty$ for $n=0,1,\dots,\ell-1$. By summing
over the $\ell$ intervals we conclude the claim.
%that $\|D_x^{s_k}u\|_{L^{5}_xL^{10}_{[0,T^*]}}<\infty$.

Next we prove \eqref{eq5}. Assume that the solution exists for $|t|<T'$ with $\|u\|_{L^{5k/4}_xL^{5k/2}_{[0,T']}}<\infty$, by the above claim we also have $\|D^{s_k}_xu\|_{L^5_xL^{10}_{[0,T']}}<\infty$. Moreover, by inequality \eqref{a6}, we conclude for all $t\in [0,T']$
$$
\|D_x^{s_k}u(t)\|_{L^2_x}\lesssim
\|D_x^{s_k}u_0\|_{L^2}+\|u\|_{L^{5k/4}_xL^{5k/2}_{[0,T']}}^k\|D_x^{s_k}u\|_{L^{5}_xL^{10}_{[0,T']}}<\infty.
$$
As a consequence, $u(T')\in \dot{H}^{s_k}(\mathbb{R})$, which from Theorem
\ref{locald} implies the existence of $\delta>0$ such that the
solution exists for $|t|\leq
T'+\delta$. Thus, if $T^*<\infty$, \eqref{eq5} must be true.\\

Next, we turn to proof of  \eqref{eq6}. Let $\delta>0$ be a small constant to be chosen later. Since
$\|u\|_{L^{5k/4}_xL^{5k/2}_{[0,T^*]}}=\infty$, we can construct a family of intervals $I_n=[t_n,t_{n+1}]$ such that
$t_n<t_{n+1}$, $t_n\nearrow T^*$, and
\begin{equation}\label{eq10.0}
\|u\|_{L^{5k/4}_xL^{5k/2}_{I_n}}=\delta.
\end{equation}
From analytic interpolation, we obtain
\begin{equation}\label{eq10}
\|u\|_{L^{5k/4}_xL^{5k/2}_{I_n}}\lesssim
\|D_x^{2/3k}u\|_{L^{3k/2}_xL^{3k/2}_{I_n}}^{3/5}\|D_x^{-1/k}u\|_{L^k_xL^\infty_{I_n}}^{2/5}.
\end{equation}
On the other hand, since $u$ is a fixed point of integral equation \eqref{Phi}, for $t\in I_n$ we also have
\begin{equation}\label{IEMOD}
u(t)=U(t-t_n)u(t_n)-\mu\int_{t_n}^tU(t-t')\partial_x(u^{k+1})(t')dt'.
\end{equation}
Therefore
$$
\|D_x^{-1/k}u\|_{L^k_xL^\infty_{I_n}}\lesssim
\|D_x^{-1/k}U(t-t_n)u(t_n)\|_{L^k_xL^\infty_{I_n}}+\|D_x^{-1/k}\int_{t_n}^tU(t-t')\partial_x(u^{k+1})dt'\|_{L^k_xL^\infty_{I_n}}.
$$
To bound the linear part we use Corollary \ref{corollary12} and to
bound the integral part, we use Lemma \ref{lemma2}-\eqref{eq51} with
$(p_2,\alpha_2)=(k,1/2-1/k)$ and $(p_1,q_1,\alpha_1)=(\infty, 2,1)$.
Hence,
\begin{equation*}
\begin{split}
\|D_x^{-1/k}u\|_{L^k_xL^\infty_{I_n}}&\lesssim
\|D_x^{s_k}U(-t_n)u(t_n)\|_{L^2_x}+\|D_x^{s_k}(u^{k+1})\|_{L^1_xL^2_{I_n}}\\
&\leq cK+c\delta^k\|D_x^{s_k}u\|_{L^{5}_xL^{10}_{I_n}},
\end{split}
\end{equation*}
where in the last inequality we have used $\sup_{t\in [0,T^{\ast})}\|D_x^{s_k}u(t)\|_{L^2_x}=K$ and \eqref{eq10.0}.

It remains to bound the norm $\|D_x^{s_k}u\|_{L^{5}_xL^{10}_{I_n}}$. Indeed using again the integral equation \eqref{IEMOD}, Lemma \ref{lemma2} with $(p_1,q_1,\alpha_1)=(5,10,0)$, $(p_2,q_2,\alpha_2)=(\infty,2,1)$ and estimate \eqref{a6} we obtain
\begin{equation}\label{eq9.300}
\begin{split}
\|D_x^{s_k}u\|_{L^5_xL^{10}_{I_n}}&\lesssim
\|D_x^{s_k}U(-t_n)u(t_n)\|_{L^2_x}+\|D_x^{s_k}u\|_{L^5_xL^{10}_{I_n}}\|u\|_{L^{5k/4}_xL^{5k/2}_{I_n}}^k\\
&\leq cK+c\delta^k\|D_x^{s_k}u\|_{L^5_xL^{10}_{I_n}}.
\end{split}
\end{equation}

Hence, choosing $c\delta^k<1/2$ we conclude $\|D_x^{s_k}u\|_{L^5_xL^{10}_{I_n}}\leq 2cK$, which also implies $\|D_x^{-1/k}u\|_{L^k_xL^\infty_{I_n}}\leq 2cK$. The last two inequalities combining with \eqref{eq10}  yield
$$
\|D_x^{2/3k}u\|_{L^{3k/2}_xL^{3k/2}_{I_n}}\geq
\dfrac{\delta^{5/3}}{(2cK)^{2/3}},\quad \!\!\!\! \textrm{for all } \quad
\!\!\!\! n\in \mathbb{N}
$$
and, therefore \eqref{eq6} holds.
\end{proof}

%%%%%%%%%%%%%%%%%%%%%%%%%%%%%%%%%%%%%%%%%%%%%%%%%%%%%%%%%%%%%%%%%%%%%%%%%%%%%%%%%%%%%%%%%%%%%%%%%%%%%%%%%%%%%%%%%%%%%%%%%%%%
%%%%%%%%%%%%%%%%%%%%%%%%%%%%%%%%%%%%%%%%%%%%%%%%%%%%%%%%%%%%%%%%%%%%%%%%%%%%%%%%%%%%%%%%%%%%%%%%%%%%%%%%%%%%%%%%%%%%%%%%%%%%
%%%%%%%%%%%%%%%%%%%%%%%%%%%%%%%%%%%%%%%%%%%%%%%%%%%%%%%%%%%%%%%%%%%%%%%%%%%%%%%%%%%%%%%%%%%%%%%%%%%%%%%%%%%%%%%%%%%%%%%%%%%%
%%%%%%%%%%%%%%%%%%%%%%%%%%%%%%%%%%%%%%%%%%%%%%%%%%%%%%%%%%%%%%%%%%%%%%%%%%%%%%%%%%%%%%%%%%%%%%%%%%%%%%%%%%%%%%%%%%%%%%%%%%%%
%%%%%%%%%%%%%%%%%%%%%%%%%%%%%%%%%%%%%%%%%%%%%%%%%%%%%%%%%%%%%%%%%%%%%%%%%%%%%%%%%%%%%%%%%%%%%%%%%%%%%%%%%%%%%%%%%%%%%%%%%%%%
%%%%%%%%%%%%%%%%%%%%%%%%%%%%%%%%%%%%%%%%%%%%%%%%%%%%%%%%%%%%%%%%%%%%%%%%%%%%%%%%%%%%%%%%%%%%%%%%%%%%%%%%%%%%%%%%%%%%%%%%%%%%
%%%%%%%%%%%%%%%%%%%%%%%%%%%%%%%%%%%%%%%%%%%%%%%%%%%%%%%%%%%%%%%%%%%%%%%%%%%%%%%%%%%%%%%%%%%%%%%%%%%%%%%%%%%%%%%%%%%%%%%%%%%%
%%%%%%%%%%%%%%%%%%%%%%%%%%%%%%%%%%%%%%%%%%%%%%%%%%%%%%%%%%%%%%%%%%%%%%%%%%%%%%%%%%%%%%%%%%%%%%%%%%%%%%%%%%%%%%%%%%%%%%%%%%%%
%%%%%%%%%%%%%%%%%%%%%%%%%%%%%%%%%%%%%%%%%%%%%%%%%%%%%%%%%%%%%%%%%%%%%%%%%%%%%%%%%%%%%%%%%%%%%%%%%%%%%%%%%%%%%%%%%%%%%%%%%%%%

\section{The construction of the wave operator}\label{IS}

In this section, we intend to show Theorem \ref{TIS}. Following the ideas
introduced by C\^ote \cite{RC}, we must look for a fixed point for the
operator
\[
\Phi:{w}(t)\longrightarrow-\mu \partial_x \int_{t}^{\infty}U(t-t^{\prime})(
{w}(t^{\prime})+U(t^{\prime})v)^{k+1}dt^{\prime},
\]
defined in the time interval $[T_{0},\infty)$, where $T_0>0$ is an
arbitrarily large number that will be chosen later.

The next proposition says that this fixed point provides a function ${u}$ satisfying the integral equation (\ref{gkdv}) (we refer to Farah \cite{farah} for a proof).

\begin{proposition}
\label{prop1} Let $w$ be a fixed point of the operator $\Phi$ and
define
\begin{equation}
\label{DOU}{u} (t) = U(t)v+{w}(t).
\end{equation}
Then ${u} $ is a solution of (\ref{gkdv}) in the time interval
$[T_{0},\infty)$.
\end{proposition}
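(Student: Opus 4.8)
The plan is to verify, by a direct computation, that the function $u$ defined in \eqref{DOU} satisfies the Duhamel (integral) formulation of \eqref{gkdv} on the interval $[T_0,\infty)$, relying only on the elementary properties of the free propagator — $U(t)U(s)=U(t+s)$, $U(0)=\mathrm{Id}$, and the commutation $U(t)\partial_x=\partial_x U(t)$ — together with the absolute convergence of the Duhamel integral, which is where the smoothing-type estimates of Section~\ref{notation} enter. First I would rewrite the fixed-point identity $w=\Phi(w)$: since $\big(w(t')+U(t')v\big)^{k+1}=u(t')^{k+1}$ and $\partial_x$ commutes with $U(t-t')$, it becomes
\begin{equation}\label{phistar}
u(t)=U(t)v-\mu\int_t^{\infty}U(t-t')\,\partial_x\big(u(t')^{k+1}\big)\,dt',\qquad t\ge T_0 .
\end{equation}
Here one first records that the integral on the right-hand side converges absolutely in $\dot{H}^{s_k}(\R)$: this follows from the smoothing-type estimates of Section~\ref{notation} (invoking the remark there that the integral $\int_0^t$ may be replaced by $\int_t^{\infty}$) together with the nonlinear bound \eqref{a6}, using that the fixed point $w$ lives in the contraction space so that $\|u\|_{L^{5k/4}_xL^{5k/2}_T}<\infty$ and $\|D^{s_k}_xu\|_{L^5_xL^{10}_T}<\infty$; in particular $u\in C([T_0,\infty):\dot{H}^{s_k}(\R))$.

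Next I would fix $T_0\le S\le t$, write \eqref{phistar} at time $S$, apply $U(t-S)$ to both sides, and push $U(t-S)$ through $\partial_x$ and inside the $t'$-integral — legitimate precisely by the absolute convergence just noted. The group law gives $U(t-S)U(S)=U(t)$ and $U(t-S)U(S-t')=U(t-t')$, so
\[
U(t-S)u(S)=U(t)v-\mu\int_S^{\infty}U(t-t')\,\partial_x\big(u(t')^{k+1}\big)\,dt' .
\]
Splitting $\int_S^{\infty}=\int_S^{t}+\int_t^{\infty}$ and using \eqref{phistar} to recognize $U(t)v-\mu\int_t^{\infty}U(t-t')\partial_x\big(u(t')^{k+1}\big)\,dt'=u(t)$, one arrives at
\[
u(t)=U(t-S)u(S)-\mu\int_S^{t}U(t-t')\,\partial_x\big(u(t')^{k+1}\big)\,dt',\qquad T_0\le S\le t ,
\]
which is exactly the integral equation associated with \eqref{gkdv} on $[T_0,\infty)$ (with base point $S$; recall $\mu=\pm1$). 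Equivalently, one may instead differentiate \eqref{phistar} in $t$, using $\partial_tU(t)=-\partial_x^3U(t)$ and the Leibniz rule for the integral with variable lower limit, to recover the PDE in \eqref{gkdv} directly on $[T_0,\infty)$. This establishes the proposition.

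I do not expect a genuine obstacle here: the computation is essentially formal once one knows that the improper Duhamel integral in \eqref{phistar} converges in $\dot{H}^{s_k}(\R)$ and that $u$ is continuous in time, and both facts are immediate from the estimates of Section~\ref{notation} applied to $w$ (exactly as in the a~priori bounds \eqref{i0}--\eqref{i1}). The single point that truly needs a line of justification is the interchange of $U(t-S)$ and $\partial_x$ with the improper $t'$-integral, which this absolute convergence delivers. A detailed verification in an entirely analogous situation is carried out in Farah \cite{farah}, which is why the authors content themselves with that reference.
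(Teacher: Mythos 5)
Your overall strategy is exactly the right one, and in fact it is the only content the paper supplies for this proposition: the authors give no proof at all, merely a pointer to Farah \cite{farah}, and the argument there is precisely the semigroup/Duhamel verification you describe (rewrite $w=\Phi(w)$ as an integral identity for $u$, apply $U(t-S)$, use the group law, split the integral, and invoke absolute convergence of the improper Duhamel integral to justify the manipulations). Your identification of the one point needing justification — interchanging $U(t-S)$ and $\partial_x$ with the $\int_t^\infty$ integral, secured by the convergence coming from \eqref{IE1} and \eqref{a6} — is also correct.

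There is, however, a sign slip in your final display that you should repair. Starting from your \eqref{phistar}, $u(t)=U(t)v-\mu\int_t^{\infty}U(t-t')\partial_x(u^{k+1})\,dt'$, applying $U(t-S)$ at time $S$ gives $U(t-S)u(S)=U(t)v-\mu\int_S^{\infty}U(t-t')\partial_x(u^{k+1})\,dt'$, and splitting $\int_S^{\infty}=\int_S^{t}+\int_t^{\infty}$ yields $U(t-S)u(S)=u(t)-\mu\int_S^{t}U(t-t')\partial_x(u^{k+1})\,dt'$, i.e.
\begin{equation*}
u(t)=U(t-S)u(S)\;+\;\mu\int_S^{t}U(t-t')\,\partial_x\big(u(t')^{k+1}\big)\,dt',
\end{equation*}
with a plus sign, not the minus sign you wrote. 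In other words, with the operator $\Phi$ exactly as printed in the paper (prefactor $-\mu$ in front of $\int_t^\infty$), the fixed point produces a solution of $\partial_t u+\partial_x^3u-\mu\partial_x(u^{k+1})=0$ rather than of \eqref{gkdv}; the correct definition of the wave-operator map (as in C\^ote) carries the prefactor $+\mu$, since $\int_t^\infty=-\int_\infty^t$ reverses the orientation of the Duhamel term. Because $\mu=\pm1$ and every estimate in Section \ref{IS} is insensitive to this sign, nothing in the contraction argument or in the limit \eqref{LIM} is affected, but as written your chain of equalities is internally inconsistent: either flip the sign in $\Phi$ (and hence in \eqref{phistar}) or in your concluding identity. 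The same sign check should be made in the alternative route you mention via $\partial_t U(t)=-\partial_x^3U(t)$ and differentiation under the integral with variable lower limit $t$, which produces the boundary term $+\mu\,\partial_x(u^{k+1})(t)$ from the paper's $\Phi$.
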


\begin{proof}[Proof of Theorem \ref{TIS}] Again we use the contraction
mapping principle. Given $T>0$, define the metric spaces
$$
X_T=\{w\in C(\R;\dot{H}^{s_k}(\mathbb{R}));\;\tres w\tres_{X_T}<\infty\}
$$
and
$$
X_T^a=\{w\in X_T;\;\tres w\tres_{X_T}\leq a\},
$$
where
\begin{equation*}
\begin{split}
\tres w \tres_{X_T}=&\|w\|_{L^\infty_T\dot{H}^{s_k}}+\|D^{s_k}_xw\|_{L^5_xL^{10}_T}+\|w\|_{L^{5k/4}_xL^{5k/2}_T}.
\end{split}
\end{equation*}
Applying Corollary \ref{corollary21} we obtain
\begin{equation*}
\begin{split}
\|\Phi(w)\|_{L^{5k/4}_xL^{5k/2}_T}&\lesssim \|D^{s_k}_x(U(t)v+{w}(t))^{k+1})\|_{L^1_xL^2_T}.
\end{split}
\end{equation*}
Using the same arguments as the ones used in \eqref{a6}, we conclude
\begin{equation*}
\|D^{s_k}_x(U(t)v+{w}(t))^{k+1})\|_{L^1_xL^2_T}\lesssim \|U(t)v+{w}(t)\|_{L^{5k/4}_xL^{5k/2}_T}^k\|D^{s_k}_x (U(t)v+{w}(t))\|_{L^5_xL^{10}_T}.
\end{equation*}

The other norms can be estimated in the same manner, which implies
\begin{equation}\label{a61}
\begin{split}
\|\Phi(w)\|_{X_T}&\lesssim \|U(t)v+{w}(t)\|_{L^{5k/4}_xL^{5k/2}_T}^k\|D^{s_k}_x (U(t)v+{w}(t))\|_{L^5_xL^{10}_T}\\
&\lesssim \|U(t)v+{w}(t)\|_{L^{5k/4}_xL^{5k/2}_T}^k\|D^{s_k}_x (U(t)v+{w}(t))\|_{L^5_xL^{10}_T}\\
&\lesssim \|U(t)v\|_{L^{5k/4}_xL^{5k/2}_T}^k\|D^{s_k}_x v\|_{L^2_x}+
\|D^{s_k}_x v\|_{L^2_x}\|w\|_{X_T}^{k}+\|w\|_{X_T}^{k+1},
\end{split}
\end{equation}
where in the last inequality we have used Lemma \ref{lemma1} and Lemma \ref{lemma12}.

Since $\|U(t)v\|_{L^{5k/4}_xL^{5k/2}_T}\rightarrow 0$ as $T\rightarrow \infty$ we can find a $T_0>0$ large enough and $a>0$ small enough such that $\Phi:X_{T_0}^a\rightarrow X_{T_0}^a$ is well defined and is a contraction. Therefore, $\Phi$ has a unique fixed point, which we denote by ${w}$. Moreover, $a>0$ can be chosen such that
\begin{equation}\label{a612}
\begin{split}
\|w\|_{X_T}&\lesssim  \|U(t)v\|_{L^{5k/4}_xL^{5k/2}_T}^k\|D^{s_k}_x v\|_{L^2_x},
\end{split}
\end{equation}
which implies $\|w\|_{X_T}\rightarrow 0$ as $T\rightarrow \infty$.

Next we show the limit (\ref{LIM}). By the Proposition, ${u} (t) = U(t)v+{w}(t)$ satisfies the integral equation in the right hand side of \eqref{Phi} in the time interval $[T_{0},\infty)$. Therefore
\begin{equation*}
\begin{split}
\|u(t)-U(t)v\|_{L^\infty_T\dot{H}^{s_k}}&= \|w\|_{L^\infty_T\dot{H}^{s_k}}= \|\Phi(w)\|_{L^\infty_T\dot{H}^{s_k}}\\
&\lesssim \|U(t)v\|_{L^{5k/4}_xL^{5k/2}_T}^k\|D^{s_k}_x v\|_{L^2_x}+
\|D^{s_k}_x v\|_{L^2_x}\|w\|_{X_T}^{k}+\|w\|_{X_T}^{k+1}.
\end{split}
\end{equation*}
The last inequality together with \eqref{a612} implies \eqref{LIM}, finishing the proof of the theorem.

%Finally, we deal with the uniqueness assertion. Let $\vec{u},\vec{v}%
%\in\mathcal{E}_{\sigma}^{T_{0}}$ be two solutions satisfying (\ref{mild1}) and
%corresponding to the same profile $\vec{h}.$ Write $\vec{w}_{1}(t)=\vec
%{u}(t)-B(t)\vec{h}$ and $\vec{w}_{2}(t)=\vec{v}(t)-B(t)\vec{h}$. It follows
%from (\ref{aux441}) that%
%\begin{align}
%\Vert\vec{w}_{1}-\vec{w}_{2}\Vert_{\mathcal{E}_{\sigma}^{T}}  &  =\Vert
%\Phi(\vec{w}_{1})-\Phi(\vec{w}_{2})\Vert_{\mathcal{E}_{\sigma}^{T}}\nonumber\\
%&  \leq CI_{\sigma}(T)\Vert\vec{w}_{1}-\vec{w}_{2}\Vert_{\mathcal{E}_{\sigma
%}^{T}}(\,\Vert\vec{u}\Vert_{\mathcal{E}_{\sigma}^{T}}+\Vert\vec{v}%
%\Vert_{\mathcal{E}_{\sigma}^{T}})^{\rho-1}, \label{uni1}%
%\end{align}
%for $T\geq T_{0}.$ Let $b=\max\{\Vert\vec{u}\Vert_{\mathcal{E}_{\sigma}%
%^{T_{0}}},\Vert\vec{v}\Vert_{\mathcal{E}_{\sigma}^{T_{0}}}\}$ and take $T\geq
%T_{0}$ such that
%\begin{equation}
%CI_{\sigma}(T)2^{\rho-1}\,b^{\rho-1}\leq1/2. \label{uni2}%
%\end{equation}
%In view of (\ref{uni1})-(\ref{uni2}) we obtain
%\[
%\Vert\vec{w}_{1}-\vec{w}_{2}\Vert_{\mathcal{E}_{\sigma}^{T}}\leq\frac{1}%
%{2}\Vert\vec{w}_{1}-\vec{w}_{2}\Vert_{\mathcal{E}_{\sigma}^{T}},
%\]
%which implies $\vec{w}_{1}(t)=\vec{w}_{2}(t)$ in $[T,\infty),$ and so $\vec
%{u}(t)=\vec{v}(t)$ in $[T,\infty).$
\end{proof}

%%%%%%%%%%%%%%%%%%%%%%%%%%%%%%%%%%%%%%%%%%%%%%%%%%%%%%%%%%%%%%%%%%%%%%%%%%%%%%%%%%%%%%%%%%%%%%%%%%%%%%%%%%%%%%%%%%%%%%%%%%%%
%%%%%%%%%%%%%%%%%%%%%%%%%%%%%%%%%%%%%%%%%%%%%%%%%%%%%%%%%%%%%%%%%%%%%%%%%%%%%%%%%%%%%%%%%%%%%%%%%%%%%%%%%%%%%%%%%%%%%%%%%%%%
%%%%%%%%%%%%%%%%%%%%%%%%%%%%%%%%%%%%%%%%%%%%%%%%%%%%%%%%%%%%%%%%%%%%%%%%%%%%%%%%%%%%%%%%%%%%%%%%%%%%%%%%%%%%%%%%%%%%%%%%%%%%
%%%%%%%%%%%%%%%%%%%%%%%%%%%%%%%%%%%%%%%%%%%%%%%%%%%%%%%%%%%%%%%%%%%%%%%%%%%%%%%%%%%%%%%%%%%%%%%%%%%%%%%%%%%%%%%%%%%%%%%%%%%%
%%%%%%%%%%%%%%%%%%%%%%%%%%%%%%%%%%%%%%%%%%%%%%%%%%%%%%%%%%%%%%%%%%%%%%%%%%%%%%%%%%%%%%%%%%%%%%%%%%%%%%%%%%%%%%%%%%%%%%%%%%%%
%%%%%%%%%%%%%%%%%%%%%%%%%%%%%%%%%%%%%%%%%%%%%%%%%%%%%%%%%%%%%%%%%%%%%%%%%%%%%%%%%%%%%%%%%%%%%%%%%%%%%%%%%%%%%%%%%%%%%%%%%%%%
%%%%%%%%%%%%%%%%%%%%%%%%%%%%%%%%%%%%%%%%%%%%%%%%%%%%%%%%%%%%%%%%%%%%%%%%%%%%%%%%%%%%%%%%%%%%%%%%%%%%%%%%%%%%%%%%%%%%%%%%%%%%
%%%%%%%%%%%%%%%%%%%%%%%%%%%%%%%%%%%%%%%%%%%%%%%%%%%%%%%%%%%%%%%%%%%%%%%%%%%%%%%%%%%%%%%%%%%%%%%%%%%%%%%%%%%%%%%%%%%%%%%%%%%%
%%%%%%%%%%%%%%%%%%%%%%%%%%%%%%%%%%%%%%%%%%%%%%%%%%%%%%%%%%%%%%%%%%%%%%%%%%%%%%%%%%%%%%%%%%%%%%%%%%%%%%%%%%%%%%%%%%%%%%%%%%%%

\section*{Acknowledgements}
%%acknowledgements here%%\subsection*{Acknowledgment}
The authors thank Felipe Linares and an anonymous referee for their helpful comments and suggestions which improved the presentation of the paper.

\bibliographystyle{amsplain}

\end{document}